\newtheorem{thm}{Theorem}
\newtheorem{lem}[thm]{Lemma}        
\newtheorem{cor}[thm]{Corollary}
\newtheorem{prop}[thm]{Proposition}
\newtheorem{mthm}[thm]{Main Theorem}
\newtheorem{conj}[thm]{Conjecture}
\newtheorem{rem}{{\it Remark}} 
\theoremstyle{plain} 
\newcommand{\thistheoremname}{}
\newtheorem*{genericthm*}{\thistheoremname}
\newenvironment{namedthm*}[1]
{\renewcommand{\thistheoremname}{#1}%
	\begin{genericthm*}}
	{\end{genericthm*}}
\begin{document}
	\title[holomorphy of intertwining operators]{holomorphy of normalized intertwining operators for certain induced representations I: a toy example}
   
	\author{CAIHUA LUO}
	\address{department of mathematics \\ bar-ilan university \\ ramat-gan, 5290000\\ israel}
	\email{chluo@amss.ac.cn}
	\date{}
	\subjclass[2010]{11F66, 11F70, 22E35, 22E50}
	\keywords{Intertwining operator, Speh representation, Classical group, Holomorphy}
\begin{abstract}	
The theory of intertwining operators plays an important role in the development of the Langlands program. This, in some sense, is a very sophisticated theory, but the basic question of its singularity, in general, is quite unknown. Motivated by its deep connection with the longstanding pursuit of constructing automorphic $L$-functions via the method of integral representations, we prove the holomorphy of normalized local intertwining operators, normalized in the sense of Casselman--Shahidi, for a family of induced representations of quasi-split classical groups as an exercise. Our argument is the outcome of an observation of an intrinsic non-symmetry property of normalization factors appearing in different reduced decompositions of intertwining operators. Such an approach bears the potential to work in general.
\end{abstract}
\maketitle

\section{introduction}
Let $G_n$ be a rank $n$ quasi-split classical group defined over a $p$-adic field $F$, $\sigma_{\bar{r}}$ be a generic discrete series of a small rank group $G_{n_0}$, and $\rho_{c}(\tau_a)$ be a Speh representation associated to the supercuspidal representation $\tau$ of some $GL$, a basic question in the theory of intertwining operators is to determine the singularity of standard intertwining operators, especially the following one
\[M_{c,a}(s,~\tau,~\sigma_{\bar{r}}):~|det(\cdot)|^s\rho_{c}(\tau_a)\rtimes \sigma_{\bar{r}}\longrightarrow |det(\cdot)|^{-s}\rho_{c}(\tau_a)\rtimes \sigma_{\bar{r}}  \]
(Please refer to the main body of the paper for the notions). Inspired by the profound Langlands--Shahidi theory, a conjectural normalization factor $\alpha(s,~\tau_a,~\sigma_{\bar{r}})$ for the case $c=1$ has been proposed by Casselman--Shahidi in \cite{casselman1998irreducibility}. In the paper, following their spirit, we define a normalization factor $\alpha_{c,a}(s,~\tau,~\sigma_{\bar{r}})$ of the intertwining operator $M_{c,a}(s,~\tau,~\sigma_{\bar{r}})$ for the general case, and prove the holomorphy of the normalized version (see Main Theorem \ref{mainthm}), i.e.,
\[M^*_{c,a}(s,~\tau,~\sigma_{\bar{r}}):=\frac{1}{\alpha_{c,a}(s,~\tau,~\sigma_{\bar{r}})}M_{c,a}(s,~\tau,~\sigma_{\bar{r}}). \]   
The proof is mainly by induction in nature with the help of some simple reduction arguments. The novel part of our contribution is an observation of an intrinsic non-symmetry property of those normalized intertwining operators $\alpha_{c,a}(\cdots)$ appearing in the reduced decompositions of the intertwining operator $M_{c,a}(\cdots)$. This enables us to reduce the holomorphicity problem to some low rank cases that we can handle by hand or certain unitary cases which follows from M{\oe}glin's work.

Historically, more attention have been focused on the full normalized intertwining operators in the sense of Langlands (to name a few, see \cite{moeglin1989residue,moeglin2010holomorphy,moeglin2011intertwiningeisenstein,zhang1997localintertwining,cogdellkimshapiroshahidi2004,jantzenkim2001intertwiningimage,jiangliuzhang2013}), roughly speaking the following one
\[N_{c,a}(s,~\tau,~\sigma_{\bar{r}}):=\frac{\beta_{c,a}(s,~\tau,~\sigma_{\bar{r}})}{\alpha_{c,a}(s,~\tau,~\sigma_{\bar{r}})}M_{c,a}(s,~\tau,~\sigma_{\bar{r}}). \]
However, a special case of $M^*_{c,a}(s,~\tau,~\sigma_{\bar{r}})$ when $a=1$ and $\sigma_{\bar{r}}=1$, i.e., the degenerate principal series case, has been vastly investigated by Harris--Kudla--Sweet among others in the development of theta correspondence (see \cite{piatetskishapirotriple,kudlarallis1992,kudlasweet1997,harriskudlasweet1996,sweet1995,ikeda1992location,yamana2011,yamanatheta}). On the other hand, certain cases have recently gained the attention in the pursuit of constructing automorphic $L$-functions via the theory of integral representations, for example Rankin--Selberg method, Doubling method and its generalized version, Braverman--Kazhdan/Ng{\^o} program (see \cite{yamanatheta,kaplan2013gcd,cai2019doubling,cai2018doubling,getzliu2021refined,jiangluozhang2020Symplectic}). We hope our argument illustrated in the paper could play a role in these aspects as does in the aforementioned situations. Let us end the introduction by pointing out that our holomoprhy result proved here and extended in a future work could reprove many holomorphicity results for the full normalized intertwining operators obtained earlier by M{\oe}glin and many others.   

The structure of the paper is as follows. In the next section, we prepare some notions and state our Main Theorem \ref{mainthm}, while its proof will be given in the following sections gradually, i.e., Degenerate case $GL$ (Section \ref{caseGL}), Reduction steps (Section \ref{redsteps}), Base case $\rho_{c}(\tau)\rtimes \sigma_{r}$ (Section \ref{case1}), and Base case $\tau_a\rtimes\sigma_{r}$ (Section \ref{case2}). 

\section{main theorem}
Let $F$ be a non-archimedean local field of characteristic zero. Denote by $\mathcal{O}$ its
ring of integers and by $\mathcal{P}$ its maximal ideal generated by the uniformizer $\mathfrak{w}$. Let $q$ be the number of elements in the residue field $\mathcal{O}/\mathcal{P}$. We set $|\cdot|$ to be the absolute value of $F$ such that $|\mathfrak{w}|=q^{-1}$. Let $E/F$ be a quadratic field extension of $F$, we use the same terminologies of $F$ for $E$ if there is no confusion.

\subsection{Groups}Let $G_n$ be a quasi-split classical group of rank $n$ defined over $F$. They are the $F$-quasi-split unitary groups $U_{2n}$ and $U_{2n+1}$ of hermitian type associated to $E/F$, the $F$-split (special) odd orthogonal group $(S)O_{2n+1}$ and the symplectic group $Sp_{2n}$, and the $F$-quasi-split even (special) orthogonal group $(S)O_{2n}$. One may notice that the metaplectic group $Mp_{2n}$, the unique two-fold cover of $Sp_{2n}$, can also be dealt with in this setting, but we do not want to include it here as it may increase the burden of introducing additional notions.
\subsection{Certain induced representations}For a partition $n=akc+n_0$ for some positive integers $a,~k,~c$ and a non-negative integer $n_0$, we denote by $P_{akc}=M_{akc}N_{akc}$ the standard parabolic subgroup of $G_n$ with its Levi subgroup $M_{akc}\simeq GL_{akc}\times G_{n_0}$. Let $\tau$ be a fixed unitary supercuspidal representation of $GL_k$, we set $\tau_a$ to be the associated Steinberg representation of $GL_{ak}$, i.e., the unique subrepresentation of the normalized induced representation
\[\tau|det(\cdot)|^{\frac{a-1}{2}}\times\tau|det(\cdot)|^{\frac{a-3}{2}}\times \cdots\times \tau|det(\cdot)|^{-\frac{a-1}{2}}  :=Ind^{GL_{ak}}(\tau|det(\cdot)|^{\frac{a-1}{2}}\otimes\cdots\otimes \tau|det(\cdot)|^{-\frac{a-1}{2}}).\]
In particular, $\tau_1=\tau$. For the discrete series representation $\tau_a$ of $GL_{ak}$, we denote $\rho_c(\tau_a)$ to be the associated Speh representation of $GL_{akc}$, i.e., the unique Langlands quotient of the standard module
\[\tau_a|det(\cdot)|^{\frac{c-1}{2}}\times\tau_a|det(\cdot)|^{\frac{c-3}{2}}\times \cdots\times \tau_a|det(\cdot)|^{-\frac{c-1}{2}},\]
which can also be viewed as the unique subrepresentation of
\[\tau_a|det(\cdot)|^{-\frac{c-1}{2}}\times\tau_a|det(\cdot)|^{-\frac{c-3}{2}}\times \cdots\times \tau_a|det(\cdot)|^{\frac{c-1}{2}}.\]
Let $n_{00}$ be a non-negative integer and $\sigma$ be a fixed generic supercuspidal representation of $G_{n_{00}}$ (please refer to \cite{shahidi1990proof} or \cite{luo2021casselmanshahidiconjecture} for the definition of the standard notion ``generic''), we say an irreducible admissible representation $\sigma_{\bar{r}}$ of $G_{n_0}$ is supported on $\tau$ and $\sigma$ if the unitary part of the Bernstein--Zelevinsky data of $\sigma_{\bar{r}}$ is contained in $\{\tau,~\tau^*,~\sigma\}$, i.e., $\sigma_{\bar{r}}$ is a constituent of the normalized parabolic induction representation
\[|det(\cdot)|^{s_1}\tau\times|det(\cdot)|^{s_2}\tau\times\cdots\times |det(\cdot)|^{s_t}\tau\rtimes\sigma:=Ind^{G_{n_0}}(|det(\cdot)|^{s_1}\tau\otimes|det(\cdot)|^{s_2}\tau\otimes\cdots\otimes |det(\cdot)|^{s_t}\tau\otimes\sigma) \]
for some real numbers $s_i\in \mathbb{R}$, $i=1,~\cdots,~t$. Here $\tau^*$ is the (conjugate) contragredient representation of $\tau$. 

For an irreducible admissible representation $\sigma_{\bar{r}}$ of $G_{n_0}$ and a Speh representation $\rho_{c}(\tau_a)$ of $GL_{akc}$, we define the associated normalized parabolic induction of $G_n$ as follows, for $s\in\mathbb{C}$,
\[\rho_c(\tau_a)|det(\cdot)|^s\rtimes \sigma_{\bar{r}}:=Ind^{G_n}_{P_{akc}}(\rho_c(\tau_a)|det(\cdot)|^s\otimes \sigma_{\bar{r}}). \]
In the paper, we will only consider the case that $\sigma_{\bar{r}}$ is a generic discrete series representation of $G_{n_0}$. Indeed, the condition that $\sigma_{\bar{r}}$ is a discrete series representation supported on $\tau$ and $\sigma$ implies readily that $\tau^*\simeq \tau$, i.e., $\tau$ is (conjugate) self-dual (see \cite{tadic1998regular,heiermann2004decomposition,luo2018R}).
 
\subsection{Intertwining operator} For $\sigma_{\bar{r}}$ a generic discrete series representation of $G_{n_0}$,
we let $M_{c,a}(s,~\tau,~\sigma_{\bar{r}})$ be the following standard intertwining operator
\[M_{c,a}(s,~\tau,~\sigma_{\bar{r}}):~\rho_c(\tau_a)|det(\cdot)|^s\rtimes\sigma_{\bar{r}}\longrightarrow\rho_c(\tau_a)^*|det(\cdot)|^{-s}\rtimes\sigma_{\bar{r}} \]
defined by the continuation of the integral \cite{waldspurger2003formule}, for $f_s(g)\in \rho_c(\tau_a)|det(\cdot)|^s\rtimes\sigma_{\bar{r}}$,
\[\int_{N_{kc}}f_s(w_cug)du. \]
Here $\rho_c(\tau_a)^*$ is the (conjugate) contragredient of $\rho_c(\tau_a)$ which is isomorphic to $\rho_c(\tau^*_a)$, and the Weyl element $w_c\in G_n$ is given by, $N_0=2n_0$ or $2n_0+1$ depends on $G_{n_0}$,
\[w_c:=(-1)^{akc}\begin{pmatrix}
&  & I_{akc} \\ 
& I_{N_0} &  \\ 
\pm I_{akc} &  & 
\end{pmatrix}. \]
One may note that the above $w_c$ does not always exist for $SO_{2n}$. One can slightly modify $w_c$ to fix this issue as done in \cite{luo2021casselmanshahidiconjecture}, but we can also consider its equivalent form $$J_{\bar{P}_{akc}|P_{akc}}(s,~\tau,~\sigma_{\bar{r}}):~Ind^{G_n}_{P_{akc}}(\rho_c(\tau_a)|det(\cdot)|^s\otimes \sigma_{\bar{r}})\longrightarrow Ind^{G_n}_{\bar{P}_{akc}}(\rho_c(\tau_a)|det(\cdot)|^s\otimes \sigma_{\bar{r}}), $$
which is given by the continuation of the integral, for $f_s(g)\in \rho_c(\tau_a)|det(\cdot)|^s\rtimes\sigma_{\bar{r}}$,
\[\int_{\bar{N}_{akc}}f_s(ug)du. \]
Here $\bar{P}_{akc}=M_{akc}\bar{N}_{akc}$ is the unique opposite parabolic subgroup of $P_{akc}$. Indeed, we have
\[M_{c,a}(s,~\tau,~\sigma_{\bar{r}})=J_{P_{akc}|\bar{P}_{akc}}(-s,~\tau^*,~\sigma_{\bar{r}}) \circ Ad_{w_c}\]
if such a $w_c$ exists, i.e., $w_c^{-1}P_{akc}w_c=\bar{P}_{akc}$, where
\begin{align*}
Ad_{w_c}: ~Ind^{G_n}_{P_{akc}}(\rho_c(\tau_a)|det(\cdot)|^s\otimes \sigma_{\bar{r}})&\stackrel{\sim}{\longrightarrow} Ind^{G_n}_{\bar{P}_{akc}}(\rho_c(\tau^*_a)|det(\cdot)|^{-s}\otimes \sigma_{\bar{r}})\\
f_s(g)  & \mapsto (g\mapsto f_s(w_cg)).
\end{align*}
In view of this, we will always use the form $M_{c,a}(s,~\tau,~\sigma_{\bar{r}})$ for simplicity throughout the paper.
\subsection{Main Theorem}
Inspired by the profound Langlands--Shahidi theory, following Casselman--Shahidi's normalization of intertwining operators \cite{shahidi1990proof}, we define $M^*_{c,a}(s,~\tau,~\sigma_{\bar{r}})$ to be
\[\frac{1}{\alpha_{c,a}(s,~\tau,~\sigma_{\bar{r}})}M_{c,a}(s,~\tau,~\sigma_{\bar{r}}), \]
and 
\[ \alpha_{c,a}(s,~\tau,~\sigma_{\bar{r}}):=\prod_{i=1}^{\lceil\frac{c}{2}\rceil}L(2s-1-c+2i,\tau_a,\rho)\prod_{i=1}^{\lfloor\frac{c}{2}\rfloor}L(2s-c+2i,\tau_a,\rho^-)L(s-\frac{c-1}{2},\tau_a\times \sigma_{\bar{r}}), \]
where for $n_0=0$, $L(s,\tau_a\times\sigma_{\bar{r}}):=L(s,\tau_a)$ if $G_n=Sp_{2n}$ and $U_{2n+1}$, 1 otherwise. Here $\rho$ (resp. $\rho^-$) is defined as follows.
\[\rho:=\begin{cases}
\mbox{Asai},& \mbox{if } G_n=U_{2n},\\
\mbox{Asai}\otimes \chi_{E/F}, &\mbox{if } G_n=U_{2n+1},\\
Sym^2,& \mbox{if } G_n=(S)O_{2n+1},\\
\Lambda^2,& \mbox{if } G_n=Sp_{2n}\mbox{ or }(S)O_{2n};
\end{cases}\]
\[\rho^-:=\begin{cases}
\mbox{Asai}\otimes \chi_{E/F},& \mbox{if } G_n=U_{2n},\\
\mbox{Asai}, &\mbox{if } G_n=U_{2n+1},\\
\Lambda^2,& \mbox{if } G_n=(S)O_{2n+1},\\
Sym^2,& \mbox{if } G_n=Sp_{2n}\mbox{ or }(S)O_{2n}.
\end{cases}\]
Here ``Asai'' is the Asai representation of the $L$-group of the scalar restriction $Res_{E/F}(GL_k)$, $\chi_{E/F}$ is the quadratic character associated to the field extension $E/F$ via class field theory, and $Sym^2$ (resp. $\Lambda^2$) is the symmetric (resp. exterior) second power of the standard representation of $GL_k$.

Indeed, based on the calculation of Plancherel measure, we know that, up to invertible elements in $\mathbb{C}[q^{-s},~q^s]$,
\[M^*_{c,a}(-s,~\tau^*,~\sigma_{\bar{r}})\circ M^*_{c,a}(s,~\tau,~\sigma_{\bar{r}})=\frac{1}{\beta_{c,a}(s,~\tau,~\sigma_{\bar{r}})\beta_{c,a}(-s,~\tau^*,~\sigma_{\bar{r}})}. \]
Here $\beta_{c,a}(s,~\tau,~\sigma_{\bar{r}})$ is given similarly as $\alpha_{c,a}(s,~\tau,~\sigma_{\bar{r}})$ by
\[\beta_{c,a}(s,~\tau,~\sigma_{\bar{r}}):=\prod_{i=1}^{\lceil\frac{c}{2}\rceil}L(2s+c+2-2i,\tau_a,\rho)\prod_{i=1}^{\lfloor\frac{c}{2}\rfloor}L(2s+c+1-2i,\tau_a,\rho^-)L(s+\frac{c+1}{2},\tau_a\times \sigma_{\bar{r}}). \]
Now we can state our Main Theorem as follows.
\begin{mthm}\label{mainthm}
	Retain the notions as above. We have
	\[M^*_{c,a}(s,~\tau,~\sigma_{\bar{r}}):=\frac{1}{\alpha_{c,a}(s,~\tau,~\sigma_{\bar{r}})}M_{c,a}(s,~\tau,~\sigma_{\bar{r}})\mbox{ is holomorphic for all }s\in \mathbb{C}. \]
\end{mthm}
\begin{rem}
	In the paper, we will prove our Main Theorem for the case $\tau\simeq \tau^*$, leaving the case $\tau\not\simeq \tau^*$ for the interested readers, for the reason that the holomorphy of $M_{c,a}^*(s,~\tau,~\sigma_{\bar{r}})$ for the latter case could be analyzed similarly as done for the former case. Therefore, we always assume that $\tau\simeq \tau^*$ in the remaining part of the paper.
\end{rem}
One may notice that such a problem for degenerate principal series, i.e., when $n_{0}=0$ and $a=1$, has attracted a lot of attention in the development of the theory of theta correspondence and the classical doubling method. It has been investigated and fully understood for all classical groups, unnecessary quasi-split, by Harris--Kudla--Sweet among others (see \cite{piatetskishapirotriple,harriskudlasweet1996,kudlarallis1992,kudlasweet1997,sweet1995,ikeda1992location,yamana2011,yamanatheta}. On the other hand, it is well-known that $M^*_{c,1}(s,~\tau,~1)$ is always non-zero, i.e., the singularity of $M_{c,1}(s,~\tau,~1)$ is exactly given by the normalization factor $\alpha_{c,1}(s,~\tau,~1)$. In view of this and the non-zero result for $M^*_{1,a}(s,~\tau,~\sigma_{\bar{r}})$ proved in \cite{luo2021casselmanshahidiconjecture}, one may wonder if the non-zero property holds in our setting in the paper. Unfortunately, we do not know how to prove it in general, but a special case, which covers the degenerate principal series case, can be deduced from the Plancherel formula of intertwining operators in what follows. For simplicity, we write $M^*_c(s,~\tau,~\sigma_{\bar{r}})$ (resp. $M^*(s,~\tau_a,~\sigma_{\bar{r}})$) for $M^*_{c,1}(s,~\tau,~\sigma_{\bar{r}})$ (resp. $M^*_{1,a}(s,~\tau_a,~\sigma_{\bar{r}})$), and $\alpha_c(s,~\tau,~\sigma_{\bar{r}})$ (resp. $\alpha(s,~\tau_a,~\sigma_{\bar{r}})$) for $\alpha_{c,1}(s,~\tau,~\sigma_{\bar{r}})$ (resp. $\alpha_{1,a}(s,~\tau,~\sigma_{\bar{r}})$), similarly for other related notions. In particular, $\alpha_1(s,~\tau,~\sigma_{\bar{r}})=\alpha(s,~\tau,~\sigma_{\bar{r}})$. Then we have
\begin{cor}\label{rmknonzero}
	$M^*(s,~\tau_a,~\sigma_{\bar{r}})$ and $M^*_c(s,~\tau,~\sigma)$ are always non-zero for $s\in \mathbb{C}$.
\end{cor}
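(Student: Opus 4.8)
The plan is to handle the two operators separately. For $M^*(s,~\tau_a,~\sigma_{\bar r})=M^*_{1,a}(s,~\tau_a,~\sigma_{\bar r})$ I would invoke the non-vanishing already established in \cite{luo2021casselmanshahidiconjecture}: since $\tau_a$ is a discrete series of $GL_{ak}$ and $\sigma_{\bar r}$ is a generic discrete series of $G_{n_0}$, the induced representation $\tau_a\rtimes\sigma_{\bar r}$ is generic, so the Casselman--Shahidi normalized operator acts on its one-dimensional space of Whittaker functionals through the reciprocal of the relevant local coefficient, a nonzero scalar, and hence cannot be the zero map. The real content is thus the claim for $M^*_c(s,~\tau,~\sigma)=M^*_{c,1}(s,~\tau,~\sigma)$, with $\sigma$ the fixed generic supercuspidal of $G_{n_{00}}$, where $\rho_c(\tau)$ is a Speh representation and is not generic once $c>1$, so the Whittaker shortcut is unavailable.

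For that operator I would first record that $M^*_{c,1}(s,~\tau,~\sigma)$ is entire by the Main Theorem, and that for $\mathrm{Re}(s)$ large the defining integral converges and $M_{c,1}(s,~\tau,~\sigma)$ is an isomorphism while $\alpha_{c,1}(s,~\tau,~\sigma)$ is a product of $L$-functions and so finite and nonzero; hence $M^*_{c,1}(s,~\tau,~\sigma)\neq 0$ for $\mathrm{Re}(s)\gg 0$. Then I would feed the Main Theorem into the Plancherel identity
\[M^*_{c,1}(-s,~\tau^*,~\sigma)\circ M^*_{c,1}(s,~\tau,~\sigma)=\frac{u(s)}{\beta_{c,1}(s,~\tau,~\sigma)\,\beta_{c,1}(-s,~\tau^*,~\sigma)}\,\mathrm{id},\]
with $u(s)$ a unit of $\mathbb{C}[q^{-s},q^{s}]$: since both factors on the left are holomorphic, a zero of $M^*_{c,1}(\,\cdot\,,~\tau,~\sigma)$ at $s_0$ forces the right-hand side to vanish at $s_0$, i.e. $s_0$ must be a pole of $\beta_{c,1}(s,~\tau,~\sigma)$ or of $\beta_{c,1}(-s,~\tau^*,~\sigma)$. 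As $\tau$ is unitary self-dual supercuspidal and $\sigma$ is supercuspidal, each $L$-factor in $\beta_{c,1}$ is holomorphic except for at most a simple pole at a negative half-integer, so the bad locus $Z$ is finite modulo $\frac{2\pi i}{\log q}\mathbb{Z}$, lies on the real axis, is stable under $s\mapsto -s$, and is contained in $\{\,\frac{1}{2}\le |t|\le\frac{c+1}{2}\,\}$; in particular $Z$ misses the imaginary axis.

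It then remains to exclude the finitely many $s_0\in Z$, and by the $s\mapsto -s$ symmetry I may take $s_0>0$. If $s_0>\frac{c-1}{2}$ then $\rho_c(\tau)|\det|^{s_0}\rtimes\sigma$ is a Langlands standard module, so the image of $M_{c,1}(s_0,~\tau,~\sigma)$ is the nonzero Langlands quotient, and since $\alpha_{c,1}(s,~\tau,~\sigma)$ has no pole for $\mathrm{Re}(s)>\frac{c-1}{2}$, this yields $M^*_{c,1}(s_0,~\tau,~\sigma)\neq 0$. For the remaining points $0<s_0\le\frac{c-1}{2}$ I would use the reduction steps of Section \ref{redsteps} to factor $M^*_{c,1}(s,~\tau,~\sigma)$ as a chain of normalized intertwining operators for smaller groups: the corank-one pieces of shape $M^*_{1,1}(\,\cdot\,,~\tau,~\sigma)$ are nonzero by the genericity argument above, the $GL$-type pieces are nonzero because they realize the standard embeddings among the Speh segments, and the few genuinely low-rank residual configurations are checked by hand as in the base cases of Sections \ref{case1}--\ref{case2} (for $\sigma=1$ this last input is exactly the Harris--Kudla--Sweet computation for degenerate principal series). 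Concatenating these, $M^*_{c,1}(s,~\tau,~\sigma)$ is nonzero for all $s$.

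The hard part is this last step: at the real pole-points of $\beta_{c,1}$ lying in $[\frac{1}{2},\frac{c-1}{2}]$ the ambient induced representation is no longer a standard module, so the clean Langlands-quotient argument fails, and one is forced either to track the exact pole order of $M_{c,1}(s,~\tau,~\sigma)$ through a reduced decomposition and verify it equals that of $\alpha_{c,1}(s,~\tau,~\sigma)$, or to fall back on explicit low-rank and degenerate-principal-series computations. This is also why only this special case, and not the full non-vanishing of $M^*_{c,a}$, seems within reach of the present method.
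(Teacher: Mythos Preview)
Your treatment of $M^*(s,\tau_a,\sigma_{\bar r})$ is fine and matches the paper. For $M^*_c(s,\tau,\sigma)$, however, you are missing the single input that makes the argument short, and your proposed substitute does not close the gap.

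The missing input is the elementary but crucial fact \cite[IV.1.(10)]{waldspurger2003formule} that the unnormalized operator $M_c(s,\tau,\sigma)$ is \emph{never} the zero map. Since $\alpha_c$ is a product of local $L$-functions with supercuspidal data, it has no zeros; hence any zero of $M^*_c=\alpha_c^{-1}M_c$ must be a pole of $\alpha_c$. You already have from the Plancherel identity and the Main Theorem that any zero of $M^*_c$ must be a pole of $\beta_c(s)\beta_c(-s)$. The paper simply takes the intersection of these two constraints: a direct comparison of the factors of $\alpha_c(s)$ with those of $\beta_c(\pm s)$ shows that the $\rho$-- and $\rho^-$--type poles never match (the arguments differ by odd integers, and at most one of $L(0,\tau,\rho)$, $L(0,\tau,\rho^-)$ is infinite), so the greatest common divisor reduces to
\[
\gcd\!\left(L\!\left(s-\tfrac{c-1}{2},\tau\times\sigma\right)^{-1},\; L(-2s+c-1,\tau,\rho^-)^{-1}\right),
\]
leaving the single candidate $s=\tfrac{c-1}{2}$, and only when $L(0,\tau\times\sigma)=L(0,\tau,\rho^-)=\infty$. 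That one point is then dispatched by a direct structural argument (irreducibility of $\tau\rtimes\sigma$ forces $\rho_c(\tau)|\det|^{\frac{c-1}{2}}\rtimes\sigma$ to have a unique irreducible quotient while being reducible, and $M_c(-\tfrac{c-1}{2})\circ M_c(\tfrac{c-1}{2})=\mathrm{id}$ up to scalar then forces a simple pole of $M_c$ on some constituent, whence $M^*_c(\tfrac{c-1}{2})\neq 0$).

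Because you do not invoke $M_c\neq 0$, you are left with the whole bad set $Z\subset[\tfrac12,\tfrac{c+1}{2}]\cup[-\tfrac{c+1}{2},-\tfrac12]$ rather than a single point. Your plan for $0<s_0\le\tfrac{c-1}{2}$ --- factor $M^*_c$ through the reduction steps and observe that each piece is nonzero --- does not work as stated: a composition of nonzero operators can be zero, and here the situation is worse because the factorization of $M^*_c$ acquires the discrepancy factors $P_i$, which themselves have zeros in exactly this range. Also, the ``standard module'' claim for $s_0>\tfrac{c-1}{2}$ is not literally correct since $\rho_c(\tau)$ is not tempered; what you really want there is again just $M_c\neq 0$ together with the observation that $\alpha_c$ has no pole for $\mathrm{Re}(s)>\tfrac{c-1}{2}$. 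In short: add Waldspurger's nonvanishing, compute the gcd of $\alpha_c^{-1}$ and $(\beta_c(s)\beta_c(-s))^{-1}$, and handle the one surviving point directly.
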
  
\begin{proof}
	The former case has been proved in \cite{luo2021casselmanshahidiconjecture}. As for the latter case, it follows easily from the facts that $$M^*_c(-s,~\tau,~\sigma)\circ M^*_c(s,~\tau,~\sigma)=\frac{1}{\beta_c(s,~\tau,~\sigma)\beta_c(-s,~\tau,~\sigma)} \mbox{ (up to invertible elements) }$$ and $M_c(s,~\tau,~\sigma)\neq 0$ (see \cite[IV.1.(10)]{waldspurger2003formule}). To be precise, the fact that $M_c(s,~\tau,~\sigma)\neq 0$ implies that the possible zeros of $ M^*_c(s,~\tau,~\sigma)$ are given by $$\frac{1}{\alpha_c(s,~\tau,~\sigma)},$$ but the relation $$M^*_c(-s,~\tau,~\sigma)\circ M^*_c(s,~\tau,~\sigma)=\frac{1}{\beta_c(s,~\tau,~\sigma)\beta_c(-s,~\tau,~\sigma)}$$ says that the possible zeros of $M^*_c(s,~\tau,~\sigma)$ are given by $$\frac{1}{\beta_c(s,~\tau,~\sigma)\beta_c(-s,~\tau,~\sigma)}.$$ On one hand, one can check readily that the greatest common divisor of $$\frac{1}{\alpha_c(s,~\tau,~\sigma)}$$ and $$\frac{1}{\beta_c(s,~\tau,~\sigma)\beta_c(-s,~\tau,~\sigma)}$$ is given by $$g.c.d.\left(L(s-\frac{c-1}{2},\tau\times\sigma)^{-1},~L(-2s+c-1,\tau,\rho^-)^{-1}\right).$$ Moreover, $$M_c\left(-\frac{c-1}{2},~\tau,~\sigma\right)\circ M_c\left(\frac{c-1}{2},~\tau,~\sigma\right)=id. \mbox{ (up to a non-zero scalar)}.$$
	On the other hand, if $L(0,\tau\times\sigma)=\infty=L(0,\tau,\rho^-)$, then $\tau\rtimes \sigma$ is irreducible. So at $s=\frac{c-1}{2}$, $\rho_c(\tau)|det(\cdot)|^{\frac{c-1}{2}}\rtimes \sigma$ is reducible and is a quotient of the standard module (cf. \cite{lapid2017some}),
	\[\tau|det(\cdot)|^{c-1}\times \tau|det(\cdot)|^{c-2}\times\cdots\times \tau|det(\cdot)|^1\rtimes (\tau\rtimes \sigma).\]
	Thus the reducible representation $\rho_c(\tau)|det(\cdot)|^{\frac{c-1}{2}}\rtimes \sigma$ has a unique irreducible quotient constituent, which in turn implies that the holomorphic intertwining operator $M_c(-\frac{c-1}{2},~\tau,~\sigma)$ maps a constituent of $\rho_c(\tau)|det(\cdot)|^{-\frac{c-1}{2}}\rtimes \sigma$ to zero, i.e., $M_c(s,~\tau,~\sigma)$ restricting to such a constituent in $\rho_c(\tau)|det(\cdot)|^{\frac{c-1}{2}}\rtimes \sigma$ has a simple pole at $s=\frac{c-1}{2}$. Whence $M_c^*(\frac{c-1}{2},~\tau,~\sigma)\neq 0$.
	Thus $M_c^*(s,~\tau,~\sigma)$ is non-zero for all $s\in\mathbb{C}$.
\end{proof}
To record the non-zero expectation of $M^*_{c,a}(s,~\tau,~\sigma_{\bar{r}})$, we state it as a conjecture as follows.
\begin{conj}\label{conj}
	Keep the notation as before. We have
	\[M^*_{c,a}(s,~\tau,~\sigma_{\bar{r}})\mbox{ is alwasys non-zero for any }s\in \mathbb{C}.  \]
\end{conj}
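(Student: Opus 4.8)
\textbf{Proof strategy for Conjecture \ref{conj}.}
The plan is to establish the non-vanishing of $M^*_{c,a}(s,~\tau,~\sigma_{\bar{r}})$ by combining the functional-equation constraint already recorded in the excerpt with a ``rank-reduction plus known base cases'' induction that parallels the one used for holomorphy. First, since Main Theorem \ref{mainthm} gives holomorphy of $M^*_{c,a}(s,~\tau,~\sigma_{\bar{r}})$, and since the unnormalized $M_{c,a}(s,~\tau,~\sigma_{\bar{r}})$ is known to be non-zero (this is the analogue of \cite[IV.1.(10)]{waldspurger2003formule}, which holds in our generality as the standard intertwining operator defined by analytic continuation of the integral is never identically zero), the only possible zeros of $M^*_{c,a}(s,~\tau,~\sigma_{\bar{r}})$ are the zeros of $1/\alpha_{c,a}(s,~\tau,~\sigma_{\bar{r}})$, i.e. the poles of $\alpha_{c,a}$. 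So the task is reduced to ruling out those finitely many candidate points $s=s_0$ where some $L$-factor appearing in $\alpha_{c,a}$ has a pole.

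The second step is to apply the Plancherel relation
\[M^*_{c,a}(-s,~\tau^*,~\sigma_{\bar{r}})\circ M^*_{c,a}(s,~\tau,~\sigma_{\bar{r}})=\frac{1}{\beta_{c,a}(s,~\tau,~\sigma_{\bar{r}})\beta_{c,a}(-s,~\tau^*,~\sigma_{\bar{r}})}\quad\text{(up to invertible elements),}\]
exactly as in the proof of Corollary \ref{rmknonzero}. This forces the zeros of $M^*_{c,a}(s,~\tau,~\sigma_{\bar{r}})$ to lie among the zeros of the right-hand side, so a candidate zero $s_0$ of $1/\alpha_{c,a}$ can survive only if it is simultaneously a pole of $\beta_{c,a}(s,~\tau,~\sigma_{\bar{r}})\beta_{c,a}(-s,~\tau^*,~\sigma_{\bar{r}})$; taking $\gcd$ as in Corollary \ref{rmknonzero} (now with $\tau\simeq\tau^*$ self-dual and general $a$) narrows the dangerous points to those coming from the common factor $\gcd\!\left(L(s-\tfrac{c-1}{2},~\tau_a\times\sigma_{\bar{r}})^{-1},~L(-2s+c-1,~\tau_a,~\rho^-)^{-1}\right)$, i.e. essentially the single point $s_0=\tfrac{c-1}{2}$ (and its reflections under the involution $s\mapsto -s$ combined with contragredience). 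At such a point one uses the same mechanism as in Corollary \ref{rmknonzero}: at $s_0=\tfrac{c-1}{2}$ one has $M_{c,a}(-\tfrac{c-1}{2},~\tau,~\sigma_{\bar{r}})\circ M_{c,a}(\tfrac{c-1}{2},~\tau,~\sigma_{\bar{r}})=\mathrm{id}$ up to a non-zero scalar, and a reducibility/unique-irreducible-quotient analysis of $\rho_c(\tau_a)|\det(\cdot)|^{(c-1)/2}\rtimes\sigma_{\bar{r}}$ via its realization as a quotient of a standard module (as in \cite{lapid2017some}) shows that $M_{c,a}(s,~\tau,~\sigma_{\bar{r}})$, restricted to the relevant constituent, acquires a genuine simple pole there, which exactly cancels the zero of $1/\alpha_{c,a}$; hence $M^*_{c,a}(\tfrac{c-1}{2},~\tau,~\sigma_{\bar{r}})\neq0$.

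The third step handles the remaining candidate points (those arising purely from the Rankin--Selberg-type factor $L(s-\tfrac{c-1}{2},~\tau_a\times\sigma_{\bar{r}})$ away from $\tfrac{c-1}{2}$, and from the Asai/$\mathrm{Sym}^2/\Lambda^2$ factors) by the induction engine of the paper: decompose $M_{c,a}(s,~\tau,~\sigma_{\bar{r}})$ into a product of lower-rank normalized operators using the reduced decompositions of Section \ref{redsteps}, invoke the non-symmetry property of the normalization factors $\alpha_{c,a}(\cdots)$ to isolate the factor responsible for a given candidate zero, and then reduce to one of the two base cases: $M^*_c(s,~\tau,~\sigma)$ (handled by Corollary \ref{rmknonzero}) or $M^*(s,~\tau_a,~\sigma_{\bar{r}})$ (handled by \cite{luo2021casselmanshahidiconjecture}), both of which are known to be everywhere non-zero. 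Feeding these non-vanishing base cases through the multiplicativity of intertwining operators, and checking that the induction does not introduce spurious cancellations (each intermediate $\alpha$-factor is matched by a genuine pole of the corresponding intermediate $M$-operator), yields non-vanishing of $M^*_{c,a}(s,~\tau,~\sigma_{\bar{r}})$ at every $s\in\mathbb{C}$.

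\textbf{The main obstacle.} The delicate point is the third step: the holomorphy induction in the body of the paper only needs an \emph{upper} bound on the poles of $M_{c,a}$ (every pole is among those of $\alpha_{c,a}$), whereas non-vanishing requires a \emph{lower} bound --- one must exhibit, at each candidate zero of $1/\alpha_{c,a}$, an actual pole of $M_{c,a}$ of the matching order on some constituent. Producing these poles in the general $(c,a,\sigma_{\bar{r}})$ case seems to need precise reducibility information for $\rho_c(\tau_a)|\det(\cdot)|^{s_0}\rtimes\sigma_{\bar{r}}$ and control of the image of the standard intertwining operator there, which is only partially available; this is presumably why the statement is left as a conjecture rather than a theorem, and why only the special cases $a=1$ and $c=1$ (where such reducibility is completely understood, via Harris--Kudla--Sweet, M\oe{}glin, and \cite{luo2021casselmanshahidiconjecture}) can be settled unconditionally.
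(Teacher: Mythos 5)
There is a fundamental mismatch here: the statement you are proving is left \emph{open} in the paper. The paper proves non-vanishing only in the two special cases of Corollary \ref{rmknonzero} ($M^*(s,\tau_a,\sigma_{\bar r})$, i.e.\ $c=1$, quoting \cite{luo2021casselmanshahidiconjecture}, and $M^*_c(s,\tau,\sigma)$, i.e.\ $a=1$ with $\sigma$ supercuspidal), and explicitly states that it does not know how to prove the general case, which is why it is recorded as Conjecture \ref{conj}. So your proposal cannot be compared to a proof in the paper; it has to stand on its own, and it does not: it is a strategy outline whose decisive step (your ``third step'') is precisely the point where the argument is missing, as you yourself concede in the final paragraph.

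Concretely, two things break. First, the $\gcd$ reduction of your second step is carried out in the paper only for $a=1$ and $\sigma_{\bar r}=\sigma$ supercuspidal; for general $a$ and a general discrete series $\sigma_{\bar r}$ the factor $L(s-\tfrac{c-1}{2},\tau_a\times\sigma_{\bar r})$ decomposes via $(\star\star)$ into many Rankin--Selberg factors attached to the segments $r_1,\dots,r_t$, and the common divisor with $\beta_{c,a}(s)\beta_{c,a}(-s)$ is no longer concentrated at ``essentially the single point $s_0=\tfrac{c-1}{2}$''; moreover the mechanism used at that point in Corollary \ref{rmknonzero} (irreducibility of $\tau\rtimes\sigma$ when $L(0,\tau\times\sigma)=\infty=L(0,\tau,\rho^-)$, plus the standard-module quotient realization of $\rho_c(\tau)|\det(\cdot)|^{(c-1)/2}\rtimes\sigma$) uses reducibility information that is simply not available for $\rho_c(\tau_a)|\det(\cdot)|^{s_0}\rtimes\sigma_{\bar r}$ in general. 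Second, your induction engine does not transport non-vanishing the way it transports holomorphy: a composition of holomorphic operators is holomorphic, but a composition of non-zero operators restricted to the relevant induced subrepresentation can perfectly well vanish (the image of the first factor may lie in the kernel of the next), and in addition the discrepancy factors $P_i$ may have \emph{zeros} at the points in question, killing the conclusion. To make the scheme work you would need, at every candidate zero of $1/\alpha_{c,a}$, a genuine pole of $M_{c,a}$ of matching order on some constituent --- a lower bound on singularities that neither the paper's holomorphy argument nor your outline produces. So the proposal does not prove the conjecture; it reproduces the heuristic the paper already records and stops exactly at the known obstruction.
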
 

\section{proof of main theorem \ref{mainthm}: degenerate case gl }\label{caseGL}
Before turning to the proof of our Main Theorem \ref{mainthm}, let us first prove its analogue statement for general linear groups $GL$, which will be needed in what follows. Recall that $\tau$ is a supercuspidal representation of $GL_k$, and $\tau_a$ (resp. $\rho_c(\tau_a)$) is the associated Steinberg (resp. Speh) representation of $GL_{ak}$ (resp. $GL_{akc}$), i.e., 
\[\tau_a\hookrightarrow |det(\cdot)|^{\frac{(a-1)}{2}}\tau\times|det(\cdot)|^{\frac{(a-3)}{2}}\tau\times\cdots\times |det(\cdot)|^{-\frac{(a-1)}{2}}\tau,\] \[(\mbox{resp. } \rho_c(\tau_a)\hookrightarrow |det(\cdot)|^{-\frac{(c-1)}{2}}\tau_a\times|det(\cdot)|^{-\frac{(c-3)}{2}}\tau_a\times\cdots\times |det(\cdot)|^{\frac{(c-1)}{2}}\tau_a).\]
Consider the standard intertwining operator as in \cite[Part 1]{moeglin1989residue}, for $s\in\mathbb{C}$,
\[
M_{GL}(s,~\rho_{c}(\tau_a),~\rho_d(\tau_b)):~ |det(\cdot)|^s\rho_c(\tau)\times |det(\cdot)|^{-s}\rho_d(\tau_b)\longrightarrow |det(\cdot)|^{-s}\rho_d(\tau_b)\times |det(\cdot)|^s\rho_c(\tau_a),
\]
we define the corresponding normalized version to be
\[M^*_{GL}(s,~\rho_{c}(\tau_a),~\rho_d(\tau_b)):=\frac{1}{\alpha_{GL}(s,~\rho_{c}(\tau),~\rho_{d}(\tau))}M_{GL}(s,~\rho_{c}(\tau_a),~\rho_d(\tau_b)). \]
Here $$\alpha_{GL}(s,~\rho_c(\tau_a),~\rho_d(\tau_b)):=\prod_{j=\frac{|c-d|}{2}}^{\frac{c+d-2}{2}}L(2s-j,~\tau_a\times \tau^\vee_b),$$ and $\tau^\vee$ is the contragredient representation of $\tau$. Then we have
\begin{prop}\label{propGL}
	Retain the notions as above. $$M^*_{GL}(s,~\rho_{c}(\tau_a),~\rho_d(\tau_b))$$ is holomorphic for any $s\in\mathbb{C}$.
\end{prop}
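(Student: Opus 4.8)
The plan is to reduce the holomorphy of $M^*_{GL}(s,~\rho_c(\tau_a),~\rho_d(\tau_b))$ to the rank-one intertwining operators among the shifted copies of $\tau_a$ and $\tau_b$, and to exploit the known structure of Speh modules as subquotients of products of Steinberg representations. Concretely, I would first embed $\rho_c(\tau_a)$ into the product $\prod_{i}|det(\cdot)|^{-\frac{c-1}{2}+i-1}\tau_a$ (over $i=1,\dots,c$), and dually write $\rho_d(\tau_b)$ as a quotient of $\prod_{j}|det(\cdot)|^{\frac{d-1}{2}-j+1}\tau_b$. Then $M_{GL}(s,~\rho_c(\tau_a),~\rho_d(\tau_b))$ is induced from — more precisely, is the restriction to the Speh-isotypic piece of — a composite of $cd$ rank-one swaps, each of which is a $GL$ intertwining operator $M(s',~\tau_a,~\tau_b^\vee)$ of the Langlands--Shahidi type between $|det|^{x}\tau_a\times|det|^{y}\tau_b$ and $|det|^{y}\tau_b\times|det|^{x}\tau_a$ for appropriate $x,y$ depending on $s$ and the indices $i,j$. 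The normalizing factor $\alpha_{GL}$ is, up to the standard inductive product formula for normalization factors, exactly the product of the rank-one normalizing $L$-factors $L(2s-j,\tau_a\times\tau_b^\vee)$ that survive after cancellation; the range $j=\frac{|c-d|}{2},\dots,\frac{c+d-2}{2}$ is precisely the multiset of shifts for which the two Speh supports genuinely interact.

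Next I would invoke the rank-one fact (Bernstein--Zelevinsky / Shahidi) that for discrete series $\tau_a,\tau_b$ of general linear groups the normalized rank-one operator $M^*(s',~\tau_a,~\tau_b^\vee):=L(2s',\tau_a\times\tau_b^\vee)^{-1}M(s',~\tau_a,~\tau_b^\vee)$ is holomorphic and non-zero for all $s'$ — this is the $GL\times GL$ Rankin--Selberg normalization, where the standard $L$-factor has no zeros and the operator's only poles are captured by that $L$-factor. Composing the $cd$ normalized rank-one operators then gives a holomorphic operator on the full induced product $\prod_i|det|^{x_i}\tau_a\times\prod_j|det|^{y_j}\tau_b$, and its normalizing constant is $\prod L(2s-j,\tau_a\times\tau_b^\vee)$ over the full multiset of shifts $j$. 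Finally, one restricts to the Speh subrepresentation on the source: the operator $M_{GL}(s,~\rho_c(\tau_a),~\rho_d(\tau_b))$ is the composite of this product operator with the embedding $\rho_c(\tau_a)\hookrightarrow\prod$ and the projection $\prod\twoheadrightarrow\rho_d(\tau_b)$, so it is a priori holomorphic after dividing by the \emph{full} product of $L$-factors; the content of the proposition is that on the Speh-isotypic component the ``extra'' $L$-factors (those indexed by $j<\frac{|c-d|}{2}$) do not actually contribute poles, because the corresponding rank-one swaps act on segments that are linked inside a single $\tau_a$ or $\tau_b$ and hence restrict to isomorphisms (or their would-be poles are killed by the Speh structure). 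This is exactly the kind of cancellation that appears in Mœglin's analysis of $GL$ intertwining operators among Speh representations in \cite{moeglin1989residue}, and I would cite that computation.

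The hard part, and the step I expect to be the main obstacle, is the bookkeeping of exactly which rank-one $L$-factors survive after restriction to the Speh subquotient — i.e., matching the product $\prod_{j=|c-d|/2}^{(c+d-2)/2}L(2s-j,\tau_a\times\tau_b^\vee)$ claimed in the definition of $\alpha_{GL}$ against the poles of $M_{GL}$ on the nose, rather than up to an invertible factor in $\mathbb{C}[q^{\pm s}]$. One clean way to nail this down is to argue by induction on $c+d$: the cases $c=d=1$ reduce to the rank-one Rankin--Selberg statement above, and the inductive step peels off the outermost copy $|det|^{\pm(c-1)/2}\tau_a$ from $\rho_c(\tau_a)$ (respectively $|det|^{\pm(d-1)/2}\tau_b$ from $\rho_d(\tau_b)$), writing $M_{GL}(s,\rho_c(\tau_a),\rho_d(\tau_b))$ as a composite of operators involving $\rho_{c-1}(\tau_a)$ (or $\rho_{d-1}(\tau_b)$) and a single shifted Steinberg, then checking that the quotient of normalizing factors $\alpha_{GL}(s,\rho_c,\rho_d)/[\alpha_{GL}(s,\rho_{c-1},\rho_d)\cdot(\text{rank-one factor})]$ is a unit and that no new pole is introduced. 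The telescoping of the $L$-factor products — $\prod_{j=|c-d|/2}^{(c+d-2)/2}$ versus $\prod_{j=|c-1-d|/2}^{(c-1+d-2)/2}$ times the rank-one contribution $L(2s-\frac{c+d-2}{2},\tau_a\times\tau_b^\vee)$ or similar — is a finite but slightly delicate computation, and getting the ceiling/floor and the endpoints $|c-d|/2$ right in all parity cases is where care is needed; I would not grind through it here but would record it as the key lemma underlying the induction.
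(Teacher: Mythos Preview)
Your inductive plan is close in spirit to the paper's, but the step you flag as ``checking that the quotient of normalizing factors \ldots\ is a unit'' is precisely where the argument breaks. That quotient is \emph{not} a unit in general. The paper computes it explicitly: when one peels a single $\tau_a$ off $\rho_c(\tau_a)$ (your Way 1, say), the discrepancy $P_1=\alpha_{GL}(\text{two pieces})/\alpha_{GL}(s,\rho_c,\rho_d)$ is a genuine $L$-factor such as $L(2s+\tfrac{c-1}{2},\tau\times\tau)$, which has a pole. The paper's key device---advertised in the introduction as the ``intrinsic non-symmetry property'' of normalization factors---is to run the \emph{same} induction using a \emph{second} embedding (peeling from the other end of $\rho_c$, or from $\rho_d$ instead), obtain a second discrepancy $P_2$ whose pole sits at a different $s$, and conclude holomorphy except at the common zeros of $P_1^{-1}$ and $P_2^{-1}$. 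In the reduction from general $(c,d)$ to the boundary this common locus is empty provided one chooses the side to peel according to whether $c\ge d$ or $c<d$ (and likewise $a\ge b$ or $a<b$), which is why those steps look painless; but at the boundary (e.g.\ $\rho_c(\tau)\times\tau$) a single bad point survives, namely $c=2$ and $2s=-\tfrac12$, and the paper disposes of it by hand via a simple-pole/simple-zero cancellation in the co-rank-one diagram. Your proposal contains neither the two-decomposition comparison nor this residual case analysis, so as written the induction does not close.

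Your first approach (resolve into $cd$ swaps of $\tau_a$ past $\tau_b$, normalize each, then restrict to the Speh piece) also has a real gap. It correctly yields holomorphy of $M_{GL}/\bigl(\text{full product of }cd\ L\text{-factors}\bigr)$, but $\alpha_{GL}$ is a strictly smaller product (only $\min(c,d)$ factors, those with the extremal shifts), so passing from one to the other \emph{multiplies} by the extra $L$-factors, reintroducing their poles. Your claim that on the Speh isotypic piece these poles are ``killed'' because the corresponding swaps ``restrict to isomorphisms'' is exactly the heart of the matter and is not justified: the swaps in question are between linked segments, where the unnormalized operator has a pole and the normalized one has a zero on a proper subspace, and one must actually verify that the Speh subrepresentation lies in that subspace. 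That verification is essentially equivalent in difficulty to the proposition itself. The paper avoids this entirely by the two-embedding comparison above; if you want to salvage the direct approach you would need a careful segment-combinatorics argument (in the spirit of M{\oe}glin--Waldspurger) tracking exactly which subquotients survive each swap, which is considerably more work than you indicate.
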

The main idea is to analyze the discrepancies of normalization factors $\alpha_{GL}(\cdots)$ corresponding to different reduced decompositions given by different embeddings of our induced data. We record those decompositions in the following for later use.

\underline{Reduced decompositions}: viewing $\rho_{c}(\tau_a)$ or $\rho_{d}(\tau_b)$ as a subrepresentation, i.e., if at least one of $a,~b,~c,$ and $d$ is greater than 1,
\begin{align*}
\mbox{Way 1: }&\rho_{c}(\tau_a)\hookrightarrow |det(\cdot)|^{-\frac{1}{2}}\rho_{c-1}(\tau_{a})\times |det(\cdot)|^{\frac{c-1}{2}}\tau_a,\\
\mbox{Way 2: }&\rho_{c}(\tau_{a})\hookrightarrow |det(\cdot)|^{-\frac{(c-1)}{2}}\tau_a\times |det(\cdot)|^{\frac{1}{2}}\rho_{c-1}(\tau_{a}),\\
\mbox{Way 3: }&\rho_{d}(\tau_b)\hookrightarrow |det(\cdot)|^{-\frac{1}{2}}\rho_{d-1}(\tau_{b})\times |det(\cdot)|^{\frac{d-1}{2}}\tau_b,\\
\mbox{Way 4: }&\rho_{d}(\tau_b)\hookrightarrow |det(\cdot)|^{-\frac{(d-1)}{2}}\tau_b\times |det(\cdot)|^{\frac{1}{2}}\rho_{d-1}(\tau_{b}).
\end{align*}
Notice that $\rho_c(\tau_a)$ can also be viewed as a unique subrepresentation in the following way (see \cite{moeglinwaldspurger1986involution})
\[\rho_c(\tau_a)\hookrightarrow \rho_c(\tau)|det(\cdot)|^\frac{a-1}{2}\times\cdots\times\rho_c(\tau)|det(\cdot)|^{-\frac{a-1}{2}}. \]
Similarly we have
\begin{align*}
\mbox{Way 1': }&\rho_{c}(\tau_a)\hookrightarrow |det(\cdot)|^{\frac{1}{2}}\rho_{c}(\tau_{a-1})\times |det(\cdot)|^{-\frac{a-1}{2}}\rho_{c}(\tau),\\
\mbox{Way 2': }&\rho_{c}(\tau_{a})\hookrightarrow |det(\cdot)|^{\frac{(a-1)}{2}}\rho_{c}(\tau)\times |det(\cdot)|^{-\frac{1}{2}}\rho_{c}(\tau_{a-1}),\\
\mbox{Way 3': }&\rho_{d}(\tau_b)\hookrightarrow |det(\cdot)|^{\frac{1}{2}}\rho_{d}(\tau_{b-1})\times |det(\cdot)|^{-\frac{b-1}{2}}\rho_{d}(\tau),\\
\mbox{Way 4': }&\rho_{d}(\tau_b)\hookrightarrow |det(\cdot)|^{\frac{(b-1)}{2}}\rho_{d}(\tau)\times |det(\cdot)|^{-\frac{1}{2}}\rho_{d}(\tau_{b-1}).
\end{align*}
One may notice that the above definitions could be merged into two classes, instead of four classes, we would like to keep it there to distinguish their different places in our inducing data, as one can see that the terminologies Way 3 and Way 4 will also be used to describe reduced decompositions with respect to $\sigma_{\bar{r}}$ in the setting of classical groups later on.

To start the proof of Proposition \ref{propGL}, we first prove two basic cases, Case $a=b=1$ and Case $c=d=1$, which will involve the reduction principles and the induction argument needed in the general case, as follows. 
\begin{lem}\label{lemGL}
	$M^*_{GL}(s,~\tau_a,~\tau_b)$ and $M^*_{GL}(s,~\rho_{c}(\tau),~\rho_{d}(\tau))$ are holomorphic for any $s\in \mathbb{C}$.
\end{lem}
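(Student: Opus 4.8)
The plan is to treat the two cases separately, in each case comparing the normalization factors attached to two different reduced decompositions of the same standard intertwining operator and isolating the ``extra'' $L$-factors that appear in one but not the other. Since a standard intertwining operator $M_{GL}$ between $GL$-induced data is known to be holomorphic and nonzero away from the poles of its Langlands--Shahidi normalization (and indeed the rank-one operators $M_{GL}(s,\tau,\tau')$ between supercuspidals are holomorphic after dividing by $L(2s,\tau\times\tau'^\vee)$), the whole content is bookkeeping of $L$-factors: one must show that whatever pole $\alpha_{GL}$ introduces as a denominator is genuinely reflected as a pole of $M_{GL}$, so that the quotient $M^*_{GL}$ stays holomorphic. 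I would phrase this via the cocycle relation for intertwining operators: if $\rho_c(\tau)$ embeds (``Way 1'') as a subrepresentation of $|\det|^{-1/2}\rho_{c-1}(\tau)\times|\det|^{(c-1)/2}\tau$, then $M_{GL}(s,\rho_c(\tau),\rho_d(\tau))$ restricted to this submodule factors through $M_{GL}$'s for the smaller data, and the product of their Langlands--Shahidi normalizations differs from $\alpha_{GL}(s,\rho_c(\tau),\rho_d(\tau))$ only by a completely explicit, controllable ratio of $L$-functions.

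For the case $M^*_{GL}(s,\tau_a,\tau_b)$ (i.e. $c=d=1$), here $\alpha_{GL}(s,\tau_a,\tau_b)=L(2s,\tau_a\times\tau_b^\vee)$, a single $L$-factor, and $M_{GL}(s,\tau_a,\tau_b)$ is the rank-one Langlands--Shahidi operator for a pair of discrete series of $GL$, whose normalization is exactly $L(2s,\tau_a\times\tau_b^\vee)/(\varepsilon\cdot L(2s+1,\tau_a\times\tau_b^\vee))$ up to a unit; holomorphy of $M^*_{GL}$ is then immediate from the standard fact that the normalized rank-one operator is holomorphic and nonvanishing. Alternatively, and in the spirit advertised in the introduction, one induces in stages using Ways 1--4: embed $\tau_a\hookrightarrow |\det|^{(a-1)/2}\tau\times\cdots$, decompose $M_{GL}(s,\tau_a,\tau_b)$ into a product of $ak\times bk$ rank-one operators between the supercuspidal segments, multiply the corresponding $L(2s-j,\tau\times\tau^\vee)$ factors, and check that the resulting product, divided by $\prod L(2s-j+1,\ldots)$-type numerators, telescopes to $L(2s,\tau_a\times\tau_b^\vee)^{-1}$ times a unit — this is the ``non-symmetry'' observation made concrete. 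For $M^*_{GL}(s,\rho_c(\tau),\rho_d(\tau))$ (i.e. $a=b=1$), I would induct on $c+d$: apply Way 1 and Way 3 to write $\rho_c(\tau)$ and $\rho_d(\tau)$ as subrepresentations of products involving $\rho_{c-1}(\tau)$, $\rho_{d-1}(\tau)$ and the supercuspidal $\tau$, use the inductive hypothesis for the $(c-1,d)$ and $(c,d-1)$ (and $(c-1,d-1)$) operators together with the already-handled $c=d=1$ piece, and compare $\prod_{j=|c-d|/2}^{(c+d-2)/2}L(2s-j,\tau\times\tau^\vee)$ against the product coming out of the decomposition; the base case $c=d=1$ is exactly the $a=b=1$ instance of the previous paragraph.

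The main obstacle I anticipate is not any single analytic estimate but the combinatorial matching of $L$-factors across the two (or more) reduced decompositions: one has to verify that the segment structure of $\rho_c(\tau_a)$ — it is simultaneously a subrepresentation of a ``horizontal'' product of $\tau_a$'s (Ways 1--2) and of a ``vertical'' product of $\rho_c(\tau)$'s (Ways 1$'$--2$'$) — produces, after forming $M_{GL}$ as an iterated composition, precisely the factor $\prod_{j=|c-d|/2}^{(c+d-2)/2}L(2s-j,\tau_a\times\tau_b^\vee)$ and no spurious extra pole, so that dividing by $\alpha_{GL}$ leaves something holomorphic. Keeping track of which rank-one operators are holomorphic-and-invertible (the ``wrong direction'' ones, contributing only units) versus which genuinely carry the poles, and confirming that the Langlands quotient/sub realization is compatible with restriction of the intertwining operator at the relevant points, is where care is needed; once that bookkeeping is set up, holomorphy of $M^*_{GL}$ follows formally from holomorphy of the rank-one normalized operators and an induction on the total length $a+b+c+d$.
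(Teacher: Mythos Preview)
Your broad plan---decompose via the subrepresentation embeddings and induct---is in the right direction, but your expectation about what happens after decomposition is wrong, and this is precisely where the actual content of the proof lies.

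For the case $a=b=1$ you write that the iterated composition should ``produce \dots precisely the factor $\prod_{j} L(2s-j,\tau\times\tau^\vee)$ and no spurious extra pole.'' This is not what happens. When you decompose $M_{GL}(s,\rho_c(\tau),\tau)$ via Way~1 (i.e.\ $\rho_c(\tau)\hookrightarrow |\det|^{-1/2}\rho_{c-1}(\tau)\times|\det|^{(c-1)/2}\tau$), the product of the smaller normalization factors does \emph{not} equal $\alpha_{GL}(s,\rho_c(\tau),\tau)$; their ratio is
\[
P_1=L\bigl(2s+\tfrac{c-1}{2},\,\tau\times\tau\bigr),
\]
which has a genuine pole. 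The ``non-symmetry'' observation is not that things telescope, but that a \emph{different} decomposition (Way~2: $\rho_c(\tau)\hookrightarrow |\det|^{-(c-1)/2}\tau\times|\det|^{1/2}\rho_{c-1}(\tau)$) yields a \emph{different} discrepancy
\[
P_2=L\bigl(2s-\tfrac{c-3}{2},\,\tau\times\tau\bigr).
\]
By induction, any pole of $M^*_{GL}$ must lie among the poles of $P_1$ \emph{and} among those of $P_2$; so only common poles survive. One checks $(P_1^{-1},P_2^{-1})\neq 1$ iff $c=2$, at the single point $2s=-\tfrac12$.

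This leaves a residual point your proposal does not anticipate: one must show directly that $M_{GL}(s,\rho_2(\tau),\tau)$ is holomorphic at $2s=-\tfrac12$. The paper does this by reading the Way~2 diagram at that value of $s$: the first operator is a scalar with a simple pole, but the second, restricted to the image of $|\det|^{-1/2}\rho_2(\tau)\times\tau$, vanishes to first order (the subrepresentation maps to zero), so the composite is holomorphic. Likewise, in the $c=d=1$ case, after reducing to $M^*_{GL}(s,\tau_a,\tau)$ via Way~$1'$, the discrepancy is $P_{1'}=L(2s-\tfrac{a-1}{2},\tau\times\tau)$, whose pole sits at $Re(s)>0$; this is killed not by any telescoping but by the separate fact that the unnormalized $M_{GL}$ is given by an absolutely convergent integral for $Re(s)>0$.

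In short, the gap is conceptual: the argument is not that the $L$-factor bookkeeping comes out clean, but that the discrepancies coming from distinct reduced decompositions have disjoint (or nearly disjoint) pole sets, and the finitely many leftover points are handled by hand. Your proposal as written would stall once you discover that the ``spurious extra pole'' is actually there.
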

\begin{proof}
	\underline{Case $c=d=1$}: One may notice that this has been proved in \cite{luo2021casselmanshahidiconjecture}, but we still would like to provide a simplified argument for completeness. Consider the following commutative diagrams given by the embeddings $\tau_a\hookrightarrow|det(\cdot)|^{\frac{1}{2}}\tau_{a-1}\times |det(\cdot)|^{-\frac{a-1}{2}}\tau$ and $\tau_b\hookrightarrow|det(\cdot)|^{\frac{b-1}{2}}\tau\times|det(\cdot)|^{-\frac{1}{2}}\tau_{b-1}$ respectively, 
	
	\underline{Way 1':}
	\[\xymatrix{|det(\cdot)|^{s}\tau_a\times |det(\cdot)|^{-s}\tau_b  \ar@{^(->}[r] \ar[dd]_{M_{GL}(s,\tau_a,\tau_b)} & |det(\cdot)|^{s+\frac{1}{2}}\tau_{a-1}\times\mbox{$\underbrace{|det(\cdot)|^{s-\frac{a-1}{2}}\tau\times |det(\cdot)|^{-s}\tau_b}$}\ar[d]^{M_{GL}(\cdots)}\\
		&\mbox{$\underbrace{|det(\cdot)|^{s+\frac{1}{2}}\tau_{a-1}\times |det(\cdot)|^{-s}\tau_b}$}\times|det(\cdot)|^{s-\frac{a-1}{2}}\tau \ar[d]^{M_{GL}(\cdots)}\\ 
		|det(\cdot)|^{-s}\tau_b\times |det(\cdot)|^{s}\tau_a \ar@{^(->}[r]&|det(\cdot)|^{-s}\tau_b\times|det(\cdot)|^{s+\frac{1}{2}}\tau_{a-1}\times|det(\cdot)|^{s-\frac{a-1}{2}}\tau,
	}\]
	
	\underline{Way 4':}
	\[\xymatrix{|det(\cdot)|^{s}\tau_a\times |det(\cdot)|^{-s}\tau_b  \ar@{^(->}[r] \ar[dd]_{M_{GL}(s,\tau_a,\tau_b)} & \mbox{$\underbrace{|det(\cdot)|^{s}\tau_{a}\times|det(\cdot)|^{-s+\frac{b-1}{2}}\tau}$}\times |det(\cdot)|^{-s-\frac{1}{2}}\tau_{b-1}\ar[d]^{M_{GL}(\cdots)}\\
		&|det(\cdot)|^{-s+\frac{b-1}{2}}\tau\times\mbox{$\underbrace{|det(\cdot)|^{s}\tau_{a}\times|det(\cdot)|^{-s-\frac{1}{2}}\tau_{b-1}}$} \ar[d]^{M_{GL}(\cdots)}\\ 
		|det(\cdot)|^{-s}\tau_b\times |det(\cdot)|^{s}\tau_a \ar@{^(->}[r] & |det(\cdot)|^{-s+\frac{b-1}{2}}\tau\times|det(\cdot)|^{-s-\frac{1}{2}}\tau_{b-1}\times|det(\cdot)|^{s}\tau_a.
	}\]
	It is an easy calculation to see that 
	\[L(2s,\tau_a\times\tau_b)=\begin{cases}
	L(2s-\frac{b-1}{2},\tau_a\times \tau)L(2s+\frac{1}{2},\tau_a\times \tau_{b-1}), & \mbox{ if }a\geq b;\\
	L(2s-\frac{a-1}{2},\tau\times \tau_b)L(2s+\frac{1}{2},\tau_{a-1}\times \tau_b), & \mbox{ if }a< b.
	\end{cases} \]
	Which implies that the normalization factors of intertwining operators also match each other under Way 4' if $a\geq b$ or Way 1' if $a<b$. Thus it reduces to prove the holomorphy of $M^*_{GL}(s,~\tau_a,~\tau)$ and $M^*_{GL}(s,~\tau,~\tau_b)$. In what follows, we only discuss the former case, while the latter case can be proved similarly. Consider the reduced decomposition given by the above Way 1' with $a>1$ and $b=1$, and compute the discrepancy $P_{1'}$ of the normalization factors on the left hand side and the right hand side corresponding to Way 1', we obtain
	\[P_{1'}:=\frac{\alpha_{GL}(\cdots)\alpha_{GL}(\cdots)}{\alpha_{GL}(s,~\tau_a,~\tau)}=
	L\left(2s-\frac{a-1}{2}, \tau\times \tau\right). \]
	Therefore by induction on $a$, we know that $M^*_{GL}(s,~\tau_a,~\tau)$ is holomorphic except at the poles of $P_{1'}$, which is at $2s=\frac{a-1}{2}>0$. On the other hand, it is well-known that $M_{GL}(s,~\tau_a,~\tau)$ is well-defined at $Re(s)>0$, thus $M^*_{GL}(s,~\tau_a,~\tau)$ is holomorphic for any $s\in\mathbb{C}$ following from the fact that $L(2s,\tau_a\times \tau)$ has poles only at $Re(s)\leq 0$. 
	
	\underline{Case $a=b=1$}: Consider the following commutative diagrams corresponding to the embeddings $\rho_{c}(\tau)\hookrightarrow|det(\cdot)|^{-\frac{1}{2}}\rho_{c-1}(\tau)\times |det(\cdot)|^{\frac{c-1}{2}}\tau$ and $\rho_d(\tau)\hookrightarrow|det(\cdot)|^{-\frac{d-1}{2}}\tau\times|det(\cdot)|^{\frac{1}{2}}\rho_{d-1}(\tau)$ respectively, 
	
	\underline{Way 1:}
	\[\xymatrix{|det(\cdot)|^{s}\rho_{c}(\tau)\times |det(\cdot)|^{-s}\rho_{d}(\tau)  \ar@{^(->}[r] \ar[dd]_{M_{GL}(s,\rho_{c}(\tau),\rho_d(\tau))} & |det(\cdot)|^{s-\frac{1}{2}}\rho_{c-1}(\tau)\times\mbox{$\underbrace{|det(\cdot)|^{s+\frac{c-1}{2}}\tau\times |det(\cdot)|^{-s}\rho_d(\tau)}$}\ar[d]^{M_{GL}(\cdots)}\\
		&\mbox{$\underbrace{|det(\cdot)|^{s-\frac{1}{2}}\rho_{c-1}(\tau)\times |det(\cdot)|^{-s}\rho_d(\tau)}$}\times|det(\cdot)|^{s+\frac{c-1}{2}}\tau \ar[d]^{M_{GL}(\cdots)}\\ 
		|det(\cdot)|^{-s}\rho_d(\tau)\times |det(\cdot)|^{s}\rho_c(\tau) \ar@{^(->}[r]&|det(\cdot)|^{-s}\rho_d(\tau)\times|det(\cdot)|^{s-\frac{1}{2}}\rho_{c-1}(\tau)\times|det(\cdot)|^{s+\frac{c-1}{2}}\tau,
	}\]
	
	\underline{Way 4:}
	\[\xymatrix{|det(\cdot)|^{s}\rho_c(\tau)\times |det(\cdot)|^{-s}\rho_d(\tau)  \ar@{^(->}[r] \ar[dd]_{M_{GL}(s,\rho_{c}(\tau),\rho_{d}(\tau))} & \mbox{$\underbrace{|det(\cdot)|^{s}\rho_{c}(\tau)\times|det(\cdot)|^{-s-\frac{d-1}{2}}\tau}$}\times |det(\cdot)|^{-s+\frac{1}{2}}\rho_{d-1}(\tau)\ar[d]^{M_{GL}(\cdots)}\\
		&|det(\cdot)|^{-s-\frac{d-1}{2}}\tau\times\mbox{$\underbrace{|det(\cdot)|^{s}\rho_{c}(\tau)\times|det(\cdot)|^{-s+\frac{1}{2}}\rho_{d-1}(\tau)}$} \ar[d]^{M_{GL}(\cdots)}\\ 
		|det(\cdot)|^{-s}\rho_d(\tau)\times |det(\cdot)|^{s}\rho_c(\tau) \ar@{^(->}[r] & |det(\cdot)|^{-s-\frac{d-1}{2}}\tau\times|det(\cdot)|^{-s+\frac{1}{2}}\rho_{d-1}(\tau)\times|det(\cdot)|^{s}\rho_c(\tau).
	}\]
	It is an easy calculation to see that 
	\[\alpha_{GL}(s,~\rho_c(\tau),~\rho_d(\tau))=\begin{cases}
	\alpha_{GL}(s+\frac{d-1}{4},~\rho_{c}(\tau),~\tau)\cdot\alpha_{GL}(s-\frac{1}{4},~\rho_{c}(\tau),~\rho_{d-1}(\tau)), & \mbox{ if }c\geq d;\\
	\alpha_{GL}(s+\frac{c-1}{4},~\tau,~ \rho_d(\tau))\cdot\alpha_{GL}(s-\frac{1}{4},~\rho_{c-1}(\tau),~ \rho_d(\tau)), & \mbox{ if }c< d.
	\end{cases} \]
	Which implies that the normalization factors of intertwining operators also match each other under Way 4 if $c\geq d$ or Way 1 if $c<d$. Thus it reduces to prove the holomorphy of $M^*_{GL}(s,~\rho_c(\tau),~\tau)$ and $M^*_{GL}(s,~\tau,~\rho_{d}(\tau))$. In what follows, we only discuss the former case, while the latter case can be proved similarly. Consider the reduced decompositions given by the above Way 1 and the following Way 2 corresponding to the embedding $\rho_{c}(\tau)\hookrightarrow|det(\cdot)|^{-\frac{c-1}{2}}\tau\times |det(\cdot)|^{\frac{1}{2}}\rho_{c-1}(\tau)$, with $c>1$ and $d=1$,
	
	\underline{Way 2:}
	\[\xymatrix{|det(\cdot)|^{s}\rho_{c}(\tau)\times |det(\cdot)|^{-s}\rho_{d}(\tau)  \ar@{^(->}[r] \ar[dd]_{M_{GL}(s,\rho_{c}(\tau),\rho_{d}(\tau))} & |det(\cdot)|^{s-\frac{c-1}{2}}\tau\times\mbox{$\underbrace{|det(\cdot)|^{s+\frac{1}{2}}\rho_{c-1}(\tau)\times |det(\cdot)|^{-s}\rho_d(\tau)}$}\ar[d]^{M_{GL}(\cdots)}\\
		&\mbox{$\underbrace{|det(\cdot)|^{s-\frac{c-1}{2}}\tau\times |det(\cdot)|^{-s}\rho_{d}(\tau)}$}\times|det(\cdot)|^{s+\frac{1}{2}}\rho_{c-1}(\tau) \ar[d]^{M_{GL}(\cdots)}\\ 
		|det(\cdot)|^{-s}\rho_{d}(\tau)\times |det(\cdot)|^{s}\rho_c(\tau) \ar@{^(->}[r]&|det(\cdot)|^{-s}\rho_{d}(\tau)\times|det(\cdot)|^{s-\frac{c-1}{2}}\tau\times|det(\cdot)|^{s+\frac{1}{2}}\rho_{c-1}(\tau),
	}\] 
	and compute the discrepancies $P_i$ of the normalized factors on the left hand side and the right hand side corresponding to Way i, i=1, 2, we obtain
	\[P_i:=\frac{\alpha_{GL}(\cdots)\alpha_{GL}(\cdots)}{\alpha_{GL}(s,~\rho_{c}(\tau),~\tau)}=\begin{cases}
	L\left(2s+\frac{c-1}{2},\tau\times \tau\right),&\mbox{ if }i=1;\\
	L\left(2s-\frac{c-3}{2}, \tau\times \tau\right),&\mbox{ if }i=2.
	\end{cases} \]
	Therefore we know that the set of common poles of $P_1$ and $P_2$ is non-empty, i.e., $(P_1^{-1},~P_2^{-1})\neq 1$, if and only if 
	\[\frac{c-3}{2}=-\frac{c-1}{2},\mbox{ i.e., }c=2. \]
	Which in turn says that the common pole is at $2s=-\frac{1}{2}$. Now we show directly that $M^*_{GL}(s,~\rho_2(\tau),~\tau)$ is holomorphic at $2s=-\frac{1}{2}$. This follows from the detailed information we have in the commutative diagram given by Way 2 as follows.
	
	\[\xymatrix{|det(\cdot)|^{-\frac{1}{2}}\rho_2(\tau)\times \tau \ar@{^(->}[r] \ar[dd]_{M_{GL}(s,\rho_2(\tau),\tau)} & |det(\cdot)|^{-1}\tau\times\mbox{$\underbrace{\tau\times \tau}$}\ar[d]^{\mbox{simple pole}}_{scalar}\\
		&\mbox{$\underbrace{|det(\cdot)|^{-1}\tau\times \tau}$}\times\tau \ar[d]_{subrep.\mapsto 0}^{\mbox{simple zero}}\\ 
		\tau\times |det(\cdot)|^{-\frac{1}{2}}\rho_2(\tau) \ar@{^(->}[r]&\tau\times|det(\cdot)|^{-1}\tau\times\tau,
	}\] 
	As the top intertwining operator on the right hand side is a scalar of simple pole, and the bottom one restricting to $|det(\cdot)|^{-\frac{1}{2}}\rho_2(\tau)\times \tau$ is a zero map of simple order, so the composition of those two operators $M_{GL}(s,~\rho_2(\tau),~\tau)$ is holomorphic at $2s=-\frac{1}{2}$, thus $M^*_{GL}(s,~\rho_{c}(\tau),~\tau)$ is holomorphic in $s$ by induction. Whence our Lemma holds.
\end{proof}
Now return to our proof of Proposition \ref{propGL}.
\begin{proof}[Proof of Proposition \ref{propGL}]
	This follows easily from the reduction argument used in the proof of Lemma \ref{lemGL}. To be precise, if $c<d$, we apply the reduced decomposition Way 1, i.e., \[\rho_{c}(\tau_a)\hookrightarrow|det(\cdot)|^{-\frac{1}{2}}\rho_{c-1}(\tau_a)\times |det(\cdot)|^{\frac{c-1}{2}}\tau_a,\]
	and if $c\geq d$, use Way 4, i.e., \[\rho_d(\tau_b)\hookrightarrow|det(\cdot)|^{-\frac{d-1}{2}}\tau_b\times|det(\cdot)|^{\frac{1}{2}}\rho_{d-1}(\tau_b),\]
	a similar argument as in Lemma \ref{lemGL} for the case $a=b=1$ shows that it suffices to consider the case $c=1$ and the case $d=1$. On the other hand, if $a<b$, we input the reduced decomposition Way 1', i.e., \[\rho_{c}(\tau_a)\hookrightarrow|det(\cdot)|^{\frac{1}{2}}\rho_{c}(\tau_{a-1})\times |det(\cdot)|^{-\frac{a-1}{2}}\rho_{c}(\tau),\]
	and if $a\geq b$, apply Way 4', i.e., \[\rho_d(\tau_b)\hookrightarrow|det(\cdot)|^{\frac{b-1}{2}}\rho_{d}(\tau)\times|det(\cdot)|^{-\frac{1}{2}}\rho_d(\tau_{b-1}).\]
	A similar argument as in Lemma \ref{lemGL} for the case $c=d=1$ implies that it suffices to consider the case $a=1$ and the case $b=1$. Combining those two reduction steps together, it suffices to consider the following cases (other analogue cases could be handled in a similar way and have been omitted), 
	\[|det(\cdot)|^{s}\tau\times |det(\cdot)|^{-s}\rho_{d}(\tau_b),~ \mbox{and }|det(\cdot)|^{s}\rho_{c}(\tau)\times |det(\cdot)|^{-s}\tau_b.\]
	Now we start to do a further reduction step so that we can apply our results in Lemma \ref{lemGL} directly. For Case $|det(\cdot)|^{s}\rho_{c}(\tau)\times |det(\cdot)|^{-s}\tau_b$ (resp. Case $|det(\cdot)|^{s}\tau\times |det(\cdot)|^{-s}\rho_{d}(\tau_b)$), one can check readily that it reduces to consider the case $b=1$ (resp. Case $b=1$ and Case $d=1$) by comparing the reduced decompositions Way 1 and Way 4' (resp. Way 4 and Way 4') which tells us that $(P_1^{-1},~P_{4'}^{-1})=1$ (resp. $(P_4^{-1},~P_{4'}^{-1})=1$). Thus we complete the proof by Lemma \ref{lemGL}. 
\end{proof}	
\section{proof of main theorem \ref{mainthm}: reduction steps}\label{redsteps}
Now back to our setup of classical groups. Recall that $G_{n}$ is a quasi-split classical group of a fixed type with $n=akc+n_0$, $\tau$ (resp. $\sigma_{\bar{r}}$) is a supercupidal (resp. generic discrete series) representation of $GL_k$ (resp. $G_{n_0}$), we have the associated induced representation $|det(\cdot)|^s\rho_c(\tau_a)\rtimes \sigma_{\bar{r}}$ of $G_n$ and the normalized intertwining operator $$M^*_{c,a}(s,~\tau,~\sigma_{\bar{r}}):=\frac{1}{a_{c,a}(s,~\tau,~\sigma_{\bar{r}})}M_{c,a}(s,~\tau,~\sigma_{\bar{r}})$$ with 
\[M_{c,a}(s,\tau,\sigma_{\bar{r}}):~|det(\cdot)|^s\rho_c(\tau)\rtimes \sigma_{\bar{r}}\longrightarrow |det(\cdot)|^{-s}\rho_c(\tau)\rtimes \sigma_{\bar{r}} \]
the standard intertwining operator with respect to the Weyl element
\[w_{c;a}:=(-1)^{akc}\begin{pmatrix}
&  & I_{akc} \\ 
& I_{N_0} &  \\ 
\pm I_{akc} &  & 
\end{pmatrix} . \]
Note that we have the following reduced decompositions of $w_{c;a}$ via an easy calculation
\[w_{c;a}=w_{c-j;a}w_{c-j,j;a}w_{j;a}, \]
where $j=1,~\cdots,~c-1$ and
\[w_{c-j,j;a}:=\begin{pmatrix}
& I_{akj} &  &  &  \\ 
I_{ak(c-j)} &  &  &  &  \\ 
&  & I_{N_0} &  &  \\ 
&  &  &  & I_{ak(c-j)} \\ 
&  &  & I_{akj} & 
\end{pmatrix}. \]
Similarly, we have reduced decompositions with respect to partitions of $a$.
In view of our Proposition \ref{propGL} and the inductive structure of $M^*_{c,a}(s,~\tau,~\sigma_{\bar{r}})$, it is quite natural to prove the holomorphy of $M^*_{c,a}(s,~\tau,~\sigma_{\bar{r}})$ via induction on $(c,~a)$ and a reduction argument on $\sigma_{\bar{r}}$. The only issue is the discrepancy between the normalization factors, i.e., $$\alpha_{c,a}(\cdots)\neq \alpha_{c-j,a}(\cdots)\alpha_{GL}(\cdots)\alpha_{j,a}(\cdots).$$
Regarding this, for $j=1$ and $c-1$, we define the corresponding discrepancies by
\begin{align*}
P_1:&=\frac{\alpha_{c-1,a}(s-\frac{1}{2},~\tau,~\sigma_{\bar{r}})\cdot \alpha_{GL}(2s-\frac{1}{2}+\frac{c-1}{2},~\tau)\cdot \alpha_{1,a}(s+\frac{c-1}{2},~\tau,~\sigma_{\bar{r}})}{ \alpha_{c,a}(s,~\tau,~\sigma_{\bar{r}})}\\
P_{2}:&=\frac{\alpha_{1,a}(s-\frac{c-1}{2},~\tau,~\sigma_{\bar{r}})\cdot \alpha_{GL}(2s+\frac{1}{2}-\frac{c-1}{2},~\tau)\cdot \alpha_{c-1,a}(s+\frac{1}{2},~\tau,~\sigma_{\bar{r}})}{ \alpha_{c,a}(s,~\tau,~\sigma_{\bar{r}})}.
\end{align*}
Similarly we have notions $P_{1'}$ and $P_{2'}$ (resp. $P_3$ and $P_4$) for reduced decompositions with respect to $a$ (resp. $\sigma_{\bar{r}}$). It turns out that the issue mentioned above will be a big advantage for the holomorphicity problem we need to handle. As seen from the argument in Proposition \ref{propGL}, the location of poles of discrepancies $P_1$, $P_2$, $P_3$, and $P_4$ etc. are quite different, and this intrinsic non-symmetry property is our main idea hidden in the argument. Now let us begin our simple proof of Main Theorem \ref{mainthm} via the non-symmetry property of normalization factors of reduced decompositions observed and applied earlier in what follows. 

We first recall the classification of generic discrete series of $G_n$ and the associated Langlands parameters (see \cite{tadic1996generic,muic1998,moeglin2002classification,moeglin2002construction,cogdellkimshapiroshahidi2004,jantzen2000squareintegrable,jantzen2000squareintegrableII,jiangsoudry2003,jiangsoudry2012,arthur2013endoscopic,jantzenliu2014}). The Langlands parameter part is only needed to see clearly the decomposition formula for the tensor product $L$-function $L(s,\tau_a\times \sigma_{\bar{r}})$ in what follows. Indeed, it could be deduced from Langlands--Shahidi's theory, especially the multiplicativity of $\gamma$-factors (see \cite{shahidi1990multiplicativity}). 
\begin{enumerate}[(i).]
	\item generic supercuspidal $\sigma$: $N=2n+1$ or $2n$ depends only on $G_n$,
	\[\phi_\sigma:~W_F\longrightarrow  {^{L}G_n}(\mathbb{C})\stackrel{Std}{\longrightarrow} GL_N(\mathbb{C}) \mbox{ satisfies }\]
	\begin{itemize}
		\item $\phi_\sigma=\oplus_i \phi_{\rho_i}$ with $\rho_i:~ W_F\longrightarrow GL_{N_i}(\mathbb{C})$ irreducible associated to $\rho_i$ supercuspidal representation of $GL_{N_i}$ of type ${^{L}G_n}$,
		\item $\phi_{\rho_i}\not\simeq \phi_{\rho_j}$ for any $i\neq j$.
	\end{itemize}
	\item generic discrete series $\sigma_{\bar{r}}$ supported on $\tau$ and $\sigma$: 
	\[\phi_{\sigma_{\bar{r}}}:~W_F\longrightarrow  {^{L}G_n}(\mathbb{C})\stackrel{Std}{\longrightarrow} GL_N(\mathbb{C}) \mbox{ satisfies } \]
	\begin{itemize}
		\item $\phi_{\sigma_{\bar{r}}}=\phi_{\tau}\otimes S_{r_1}\oplus\phi_{\tau}\otimes S_{r_2}\oplus\cdots\oplus\phi_{\tau}\otimes S_{r_t}\oplus\phi_\sigma$,
		\item $r_1>r_2>\cdots>r_t\geq -1$ of the same parity and $t$ is even,
		\item set $\phi_{\tau}\otimes S_0=0$ and $\phi_{\tau}\otimes S_{-1}=-\phi_{\tau}\otimes S_{1}$.		
	\end{itemize}
	\item generic discrete series $\sigma_{\bar{r}}$ supported on a family of $\tau^{(i)}$ and $\sigma$:
	\[\phi_{\sigma_{\bar{r}}}:~W_F\longrightarrow  {^{L}G_n}(\mathbb{C})\stackrel{Std}{\longrightarrow} GL_N(\mathbb{C}) \mbox{ satisfies } \]
	\begin{itemize}
		\item $\tau^{(i)}\not \simeq \tau^{(j)}$ for any $i\neq j$ are (conjugate) self-dual unitary supercuspidal representations,
		\item $\phi_{\sigma_{r}}=\oplus'_i\phi_{\sigma_{\bar{r}^{(i)}}}$, where $\phi_{\sigma_{\bar{r}^{(i)}}}$ is the Langlands parameter associated to $\sigma_{\bar{r}^{(i)}}$ supported on $\tau^{(i)}$ and $\sigma$, and $\oplus'_i$ means only one $\phi_\sigma$ appearing in the family $\{\phi_{\sigma_{\bar{r}^{(i)}}}\}$ will be summed.
	\end{itemize}
\end{enumerate} 
For simplicity, we consider only the case $\sigma_{\bar{r}}$ supported on $\tau$ and $\sigma$ in the following, while the general case can be stated similarly as it obeys a ``product formula'' rule in the sense of Jantzen (see \cite{jantzen1997supports,jantzen2005duality,jantzenluo}), then the local Langlands correspondence says that 
\[\mbox{generic discrete series $\sigma_{\bar{r}}$ are parameterized by those Langlands parameters }\phi_{\sigma_{\bar{r}}}, i.e., \phi_{\sigma_{\bar{r}}}\leftrightarrow \sigma_{\bar{r}} \mbox{ with}\]
\[  \sigma_{\bar{r}}\hookrightarrow |det(\cdot)|^{\frac{r_1-r_2}{4}}\tau_{\frac{r_1+r_2}{2}}\times |det(\cdot)|^{\frac{r_3-r_4}{4}}\tau_{\frac{r_3+r_4}{2}}\times\cdots\times|det(\cdot)|^{\frac{r_{t-1}-r_t}{4}}\tau_{\frac{r_{t-1}+r_t}{2}}\rtimes\sigma.\tag{$\star$} \]
Meanwhile we have the following formula for the tensor product $L$-function
\[L(s,\tau_a\times\sigma_{\bar{r}})=\prod_{i=1}^{t}L(s,\tau_a\times \tau_{r_i})\cdot L(s,\tau_a\times \sigma)=\prod_{i=1}^{t}L(s,\tau_a\times \tau_{r_i})\cdot L\left(s+\frac{a-1}{2},\tau\times \sigma\right), \tag{$\star\star$}\]
where we set $L(s,\tau_a\times \tau_{0}):=1$ and $L(s,\tau_a\times \tau_{-1}):=L(s,\tau_a\times \tau)^{-1}$.

For the convenience of the readers, we summarize some formulas which are applied often for the calculations carried out in the proof of Main Theorem \ref{mainthm} later on.
\[\begin{cases}
L(2s,\tau_a,\rho)=\prod\limits_{i=1}^{\lceil\frac{a}{2}\rceil}L(2s+a+1-2i,\tau,\rho)\prod\limits_{i=1}^{\lfloor\frac{a}{2}\rfloor}L(2s+a-2i,\tau,\rho^-),&\\
L(2(s-\frac{1}{2}),\tau_{a-1},\rho)=\prod\limits_{i=2}^{\lceil\frac{a+1}{2}\rceil}L(2s+a+1-2i,\tau,\rho)\prod\limits_{i=2}^{\lfloor\frac{a+1}{2}\rfloor}L(2s+a-2i,\tau,\rho^-),&\\
L(2(s+\frac{1}{2}),\tau_{a-1},\rho)=\prod\limits_{i=1}^{\lceil\frac{a-1}{2}\rceil}L(2s+a+1-2i,\tau,\rho)\prod\limits_{i=1}^{\lfloor\frac{a-1}{2}\rfloor}L(2s+a-2i,\tau,\rho^-).&
\end{cases} \tag{A}\]
\[ L(s,\tau_a\times \tau_{r})=\begin{cases}
\prod\limits_{i=-\frac{r-1}{2}}^{\frac{r-1}{2}}L(s+\frac{a-1}{2}+i,\tau\times \tau),&\mbox{ if }a\geq r;\\
\prod\limits_{i=-\frac{a-1}{2}}^{\frac{a-1}{2}}L(s+\frac{r-1}{2}+i,\tau\times \tau),&\mbox{ if }a< r.
\end{cases} \tag{B}\] 
Now back to our proof of Main Theorem \ref{mainthm}. In what follows, we first carry out a reduction argument to reduce the proof of the general case to the special case that $\sigma_{\bar{r}}$ is supported on $\tau$ and $\sigma$, then do a further reduction step to the case that $\sigma_{r}$ is characterized by \[\sigma_r\leftrightarrow \phi_{\sigma_{r}}=\phi_{\tau}\otimes S_{r_1}\oplus\phi_{\tau}\otimes S_{r_2}\oplus\phi_\sigma.\]
The last reduction step involving an induction argument is to reduce the proof to two basic cases, i.e., Case $\rho_{c}(\tau)\rtimes \sigma_{r}$ and Case $\tau_a\rtimes \sigma_r$ with $r_1>a>r_2$. 
\begin{proof}[Proof of Main Theorem \ref{mainthm} (Reduction steps)]
	The three reduction steps are given as follows.
	
	\underline{\bf Step 1 } We reduce the general case $\sigma_{\bar{r}}$ supported on a family of $\tau^{(i)}$ and $\sigma$ to the case $\sigma_{\bar{r}}$ supported on $\tau$ and $\sigma$, i.e.,
	\[\sigma_{\bar{r}}\leftrightarrow\phi_{\sigma_{\bar{r}}}=\phi_{\tau}\otimes S_{r_1}\oplus\phi_{\tau}\otimes S_{r_2}\oplus\cdots\oplus\phi_{\tau}\otimes S_{r_t}\oplus\phi_\sigma. \]
	If $\tau^{(i)}\not\simeq  \tau$ for some $i$, write $\phi_{\sigma_{\bar{r}^{(i)}}}$ a summand in $\phi_{\sigma_{\bar{r}}}=\oplus'_j\phi_{\sigma_{\bar{r}^{(j)}}}$ to be $$\phi_{\sigma_{\bar{r}^{(i)}}}=\phi_{\tau^{(i)}}\otimes S_{r_1^{(i)}}\oplus\phi_{\tau^{(i)}}\otimes S_{r_2^{(i)}}\oplus\cdots\oplus\phi_{\tau^{(i)}}\otimes S_{r_t^{(i)}}\oplus\phi_\sigma,$$
	and denote $\phi_{\sigma_{\bar{r'}}}:=\oplus'_{j\neq i}\phi_{\sigma_{\bar{r}^{(j)}}}$ and 
	\[\phi_{GL}:=\phi_{\tau^{(i)}}\otimes S_{r_1^{(i)}}\oplus\phi_{\tau^{(i)}}\otimes S_{r_2^{(i)}}\oplus\cdots\oplus\phi_{\tau^{(i)}}\otimes S_{r_t^{(i)}}. \]
	Then the reduction step follows from the following facts.
	\begin{enumerate}[(i).]
		\item For $r_1^{(i)}>r_2^{(i)}>\cdots>r_t^{(i)}\geq -1$ of the same parity and $t$ is even, the induced representation 
		\[\pi_{GL}:=|det(\cdot)|^{\frac{r_1^{(i)}-r_2^{(i)}}{4}}\tau_{\frac{r_1^{(i)}+r_2^{(i)}}{2}}\times |det(\cdot)|^{\frac{r_3^{(i)}-r_4^{(i)}}{4}}\tau_{\frac{r_3^{(i)}+r_4^{(i)}}{2}}\times\cdots\times|det(\cdot)|^{\frac{r_{t-1}^{(i)}-r_t^{(i)}}{4}}\tau_{\frac{r_{t-1}^{(i)}+r_t^{(i)}}{2}} \]
		is irreducible (see \cite{bernstein1977induced,zelevinsky1980induced}).
		\item $\sigma_{\bar{r}}\hookrightarrow \pi_{GL}\rtimes \sigma_{\bar{r'}}$ with $\sigma_{\bar{r'}}$ the generic discrete series corresponding to the parameter $\phi_{\sigma_{\bar{r'}}}$.
	\end{enumerate}
	To be precise, consider the following commutative diagram given by the embedding
	
	\underline{Way 3: $\sigma_{\bar{r}}\hookrightarrow \pi_{GL}\rtimes \sigma_{\bar{r'}}$}:
	\[\xymatrix{|det(\cdot)|^{s}\rho_{c}(\tau_a)\rtimes \sigma_{\bar{r}} \ar@{^(->}[r] \ar[ddd]_{M_{c,a}(s,\tau,\sigma_{\bar{r}})} & \mbox{$\underbrace{|det(\cdot)|^{s}\rho_{c}(\tau_a)\times\pi_{GL}}$}\rtimes \sigma_{\bar{r'}}\ar[d]^{M_{GL}(\cdots)}\\
		&\pi_{GL}\times\mbox{$\underbrace{|det(\cdot)|^{s}\rho_{c}(\tau_a)\rtimes\sigma_{\bar{r'}}}$} \ar[d]^{M_{c,a}(s,\tau,\sigma_{\bar{r'}})}\\ 
		&\mbox{$\underbrace{\pi_{GL}\times|det(\cdot)|^{-s}\rho_{c}(\tau_a)}$}\rtimes \sigma_{\bar{r'}}  \ar[d]^{M_{GL}(\cdots)}\\
		|det(\cdot)|^{-s}\rho_{c}(\tau_a)\rtimes \sigma_{\bar{r}} \ar@{^(->}[r] & |det(\cdot)|^{-s}\rho_{c}(\tau_a)\times\pi_{GL}\rtimes\sigma_{\bar{r'}}.
	}\]
	As $\tau^{(i)}\not\simeq \tau$, by a simple analysis of the reduced decomposition in terms of co-rank one intertwining operators, it is well-known that those two intertwining operators $M_{GL}$ on the right hand side are always a non-zero scalar (cf. \cite{silberger1980special}), thus our reduction step holds following from an easy calculation of normalization factors which math each other.
	
	\underline{\bf Step 2 } We reduce the general case $\sigma_{\bar{r}}$ supported on $\tau$ and $\sigma$ to the case $\sigma_r$ associated to one segment, i.e.,
	\[\sigma_{r}\leftrightarrow \phi_{\sigma_{r}}=\phi_{\tau}\otimes S_{r_1}\oplus \phi_{\tau}\otimes S_{r_2}\oplus\phi_\sigma\mbox{ with }r_1>a>r_2. \]
	This follows from the following facts.
	\begin{enumerate}[(a).]
		\item For $a\geq r_1>r_2$ or $r_1>r_2\geq a$, we have the identity of $L$-functions.
		\[L\left(s-\frac{r_1-r_2}{4},\tau_a\times \tau_{\frac{r_1+r_2}{2}}\right)L\left(s+\frac{r_1+r_2}{4},\tau_a\times\tau_{\frac{r_1+r_2}{2}}\right)=L(s,\tau_a\times \tau_{r_1})L(s,\tau_a\times\tau_{r_2}).\]
		\item For $r_1>r_2>\cdots>r_t\geq -1$ of the same parity and $t$ is even, we know the induced representation 
		\[|det(\cdot)|^{\frac{r_1-r_2}{4}}\tau_{\frac{r_1+r_2}{2}}\times |det(\cdot)|^{\frac{r_3-r_4}{4}}\tau_{\frac{r_3+r_4}{2}}\times\cdots\times|det(\cdot)|^{\frac{r_{t-1}-r_t}{4}}\tau_{\frac{r_{t-1}+r_t}{2}} \]
		is irreducible (see \cite{bernstein1977induced,zelevinsky1980induced}).
	\end{enumerate}
	To be precise, in light of ($\star$), viewing $\sigma_{\bar{r}}\hookrightarrow |det(\cdot)|^{\frac{r_1-r_2}{4}}\tau_{\frac{r_1+r_2}{2}}\rtimes \sigma_{\bar{r'}}$ with
	\[\sigma_{\bar{r'}}\leftrightarrow\phi_{\sigma_{\bar{r'}}}=\phi_\tau\otimes S_{r_3}\oplus\phi_\tau\otimes S_{r_4}\oplus\cdots\oplus\phi_\tau\otimes S_{r_t}\oplus\phi_\sigma. \]
	We have the following reduced decomposition of $M_{c,a}(s,~\tau,~\sigma_{\bar{r}})$ via the embedding
	
	\underline{Way 3: $\sigma_{\bar{r}}\hookrightarrow|det(\cdot)|^{\frac{r_1-r_2}{4}}\tau_{\frac{r_1+r_2}{2}}\rtimes \sigma_{\bar{r'}}$}:
	\[\xymatrix{|det(\cdot)|^{s}\rho_{c}(\tau_a)\rtimes \sigma_{\bar{r}} \ar@{^(->}[r] \ar[ddd]_{M_{c,a}(s,\tau,\sigma_{\bar{r}})} & \mbox{$\underbrace{|det(\cdot)|^{s}\rho_{c}(\tau_a)\times|det(\cdot)|^{\frac{r_1-r_2}{4}}\tau_{\frac{r_1+r_2}{2}}}$}\rtimes \sigma_{\bar{r'}}\ar[d]^{M_{GL}(\cdots)}\\
		&|det(\cdot)|^{\frac{r_1-r_2}{4}}\tau_{\frac{r_1+r_2}{2}}\times\mbox{$\underbrace{|det(\cdot)|^{s}\rho_{c}(\tau_a)\rtimes\sigma_{\bar{r'}}}$} \ar[d]^{M_{c,a}(s,\tau,\sigma_{\bar{r'}})}\\ 
		&\mbox{$\underbrace{|det(\cdot)|^{\frac{r_1-r_2}{4}}\tau_{\frac{r_1+r_2}{2}}\times|det(\cdot)|^{-s}\rho_{c}(\tau_a)}$}\rtimes \sigma_{\bar{r'}}  \ar[d]^{M_{GL}(\cdots)}\\
		|det(\cdot)|^{-s}\rho_{c}(\tau_a)\rtimes \sigma_{\bar{r}} \ar@{^(->}[r] & |det(\cdot)|^{-s}\rho_{c}(\tau_a)\times|det(\cdot)|^{\frac{r_1-r_2}{4}}\tau_{\frac{r_1+r_2}{2}}\rtimes\sigma_{\bar{r'}}.
	}\]
	Via Lemma \ref{lemGL} + ($\star\star$) + (A)(B) + (a)(b), one can do an easy calculation of normalization factors of those intertwining operators and see that they also match each other, thus the reduction step holds.
	
	\underline{\bf Step 3 }
	The main idea is to apply an induction argument on the pair $(c,a)$ under the partial order given by
	\[(c',a')<(c,a)\mbox{ if at least one of }c'\leq c \mbox{ and }a'\leq a\mbox{ is a strict less than.} \]
	We will return to the base cases, i.e., Case $c=1$ and Case $a=1$, in the next two sections. Note that $r_1>a>r_2$ and $c,~a>1$, we calculate the discrepancies of two ways of reduced decompositions of $M_{c,a}(s,~\tau,~\sigma_{r})$ corresponding to the following two embeddings Way $1$ and Way $1'$.
	
	\underline{Way 1: $\rho_c(\tau_a)\hookrightarrow|det(\cdot)|^{-\frac{1}{2}}\rho_{c-1}(\tau_{a})\times |det(\cdot)|^{\frac{c-1}{2}}\tau_a$}:
	\[\xymatrix{ |det(\cdot)|^{s}\rho_c(\tau_a)\rtimes \sigma_r \ar@{^(->}[r] \ar[ddd]_{M_{c,a}(s,\tau,\sigma_r)} & |det(\cdot)|^{s-\frac{1}{2}}\rho_{c-1}(\tau_{a})\times\mbox{$\underbrace{|det(\cdot)|^{s+\frac{c-1}{2}}\tau_a\rtimes \sigma_r}$}\ar[d]^{M_{1,a}(s+\frac{c-1}{2},\tau,\sigma_r)}\\
		&\mbox{$\underbrace{|det(\cdot)|^{s-\frac{1}{2}}\rho_{c-1}(\tau_{a})\times|det(\cdot)|^{-s-\frac{c-1}{2}}\tau_a}$}\rtimes \sigma_r  \ar[d]^{M_{GL}(\cdots)}\\
		&|det(\cdot)|^{-s-\frac{c-1}{2}}\tau_a\times\mbox{$\underbrace{|det(\cdot)|^{s-\frac{1}{2}}\rho_{c-1}(\tau_{a})\rtimes\sigma_r}$} \ar[d]^{M_{c-1,a}(s-\frac{1}{2},\tau,\sigma_r)}\\ 
		|det(\cdot)|^{-s}\rho_c(\tau_a)\rtimes \sigma_r \ar@{^(->}[r] & |det(\cdot)|^{-s-\frac{c-1}{2}}\tau_a\times|det(\cdot)|^{-s+\frac{1}{2}}\rho_{c-1}(\tau_{a})\rtimes\sigma_r.
	}\] 
	Given the above commutative diagram, we have $M_{c,a}(s,~\tau,~\sigma_{r})=$ $$M_{c-1,a}\left(s-\frac{1}{2},~\tau,~\sigma_{r}\right)\circ M_{GL}\left((s-\frac{1}{2},-s-\frac{c-1}{2}),~\rho_{c-1}(\tau_a)\otimes \tau_a \right)\circ M_{1,a}\left(s+\frac{c-1}{2},~\tau,~\sigma_{r}\right).$$ Here $$M_{GL}\left((s-\frac{1}{2},-s-\frac{c-1}{2}),~\rho_{c-1}(\tau_a)\otimes \tau_a \right)$$ is the standard intertwining operator associated to $$\rho_{c-1}(\tau_a)|det(\cdot)|^{s-\frac{1}{2}}\times \tau_a|det(\cdot)|^{-s-\frac{c-1}{2}}.$$
	By the induction assumption and our Proposition \ref{propGL}, we have 
	\[M^*_{c-1,a}\left(s-\frac{1}{2},~\tau,~\sigma_{r}\right)\circ M^*_{GL}\left((s-\frac{1}{2},-s-\frac{c-1}{2}),~\rho_{c-1}(\tau_a)\otimes \tau_a \right)\circ M^*_{1,a}\left(s+\frac{c-1}{2},~\tau,~\sigma_{r}\right)\]
	is holomorphic for all $s\in \mathbb{C}$. Here $M^*_{GL}(\cdots):=\alpha_{GL}(\cdots)^{-1}M_{GL}(\cdots)$ with \[\alpha_{GL}(\cdots)=L\left(2s,\tau_a\times \tau_a\right).\]
	Thus $M^*_{c,a}(s,~\tau,~\sigma_{r})=\frac{\alpha_{c-1,a}(s-\frac{1}{2},~\tau,~\sigma_{r})\alpha_{GL}(\cdots)\alpha_{1,a}(s+\frac{c-1}{2},~\tau,~\sigma_{r})}{\alpha_{c,a}(s,~\tau,~\sigma_{r})}\times$
	\[M^*_{c-1,a}\left(s-\frac{1}{2},~\tau,~\sigma_{r}\right)\circ M^*_{GL}\left((s-\frac{1}{2},-s-\frac{c-1}{2}),~\rho_{c-1}(\tau_a)\otimes \tau_a \right)\circ M^*_{1,a}\left(s+\frac{c-1}{2},~\tau,~\sigma_{r}\right).\]
	So the possible poles of $M^*_{c,a}(s,~\tau,~\sigma_{r})$ are controlled by \[P_1:=\frac{\alpha_{c-1,a}(s-\frac{1}{2},~\tau,~\sigma_{r})\alpha_{GL}(\cdots)\alpha_{1,a}(s+\frac{c-1}{2},~\tau,~\sigma_{r})}{\alpha_{c,a}(s,~\tau,~\sigma_{r})}\]
	which equals, notice that $L(s, \tau_a\times \tau_a)=L(s,\tau_a,\rho)L(s,\tau_a,\rho^-)$,
	\[\prod_{j=1}^{c-2}L(2s+j,\tau_a\times \tau_a)\begin{cases}
	L(2s,\tau_a,\rho^-)L(2s+c-1,\tau_a,\rho)L(s+\frac{c-1}{2},\tau_a\times \sigma_{r}), &\mbox{ if $c$ is odd;}\\
	L(2s,\tau_a,\rho)L(2s+c-1,\tau_a,\rho)L(s+\frac{c-1}{2},\tau_a\times \sigma_{r}), &\mbox{ if $c$ is even.}
	\end{cases}\]
	In view of the fact that
	$$L(2s,\tau_a,\rho)=\prod_{i=1}^{\lceil\frac{a}{2}\rceil}L(2s+1+a-2i,\tau,\rho)\prod_{i=1}^{\lfloor\frac{a}{2}\rfloor}L(2s+c-2i,\tau,\rho^-),$$ we then know that the real parts of poles of $P_1$ are non-positive numbers.
	
	\underline{Way 1': $\rho_c(\tau_a)\hookrightarrow|det(\cdot)|^{\frac{1}{2}}\rho_c(\tau_{a-1})\times |det(\cdot)|^{-\frac{a-1}{2}}\rho_c(\tau)$}:
	\[\xymatrix{ |det(\cdot)|^{s}\rho_c(\tau_a)\rtimes \sigma_r \ar@{^(->}[r] \ar[ddd]_{M_{c,a}(s,\tau,\sigma_r)} & |det(\cdot)|^{s+\frac{1}{2}}\rho_{c}(\tau_{a-1})\times\mbox{$\underbrace{|det(\cdot)|^{s-\frac{a-1}{2}}\rho_c(\tau)\rtimes \sigma_r}$}\ar[d]^{M_{c,1}(s-\frac{a-1}{2},\tau,\sigma_r)}\\
		&\mbox{$\underbrace{|det(\cdot)|^{s+\frac{1}{2}}\rho_{c}(\tau_{a-1})\times|det(\cdot)|^{-s+\frac{a-1}{2}}\rho_c(\tau)}$}\rtimes \sigma_r  \ar[d]^{M_{GL}(\cdots)}\\
		&|det(\cdot)|^{-s+\frac{a-1}{2}}\rho_c(\tau)\times\mbox{$\underbrace{|det(\cdot)|^{s+\frac{1}{2}}\rho_{c}(\tau_{a-1})\rtimes\sigma_r}$} \ar[d]^{M_{c,a-1}(s+\frac{1}{2},\tau,\sigma_r)}\\ 
		|det(\cdot)|^{-s}\rho_c(\tau_a)\rtimes \sigma_r \ar@{^(->}[r] & |det(\cdot)|^{-s+\frac{a-1}{2}}\rho_c(\tau)\times|det(\cdot)|^{-s-\frac{1}{2}}\rho_{c}(\tau_{a-1})\rtimes\sigma_r.
	}\] 
	Regarding the above commutative diagram, we decompose $M_{c,a}(s,~\tau,~\sigma_{r})$ into the following product
	\[M_{c,a-1}(s,~\tau,~\sigma_{r})\circ M_{GL}\left((s+\frac{1}{2},-s+\frac{a-1}{2}),~\rho_{c}(\tau_{a-1})\otimes \rho_{c}(\tau)\right)\circ M_{c,1}(s-\frac{a-1}{2},~\tau,~\sigma_{r}). \]
	By our Proposition \ref{propGL}, we have $M^*_{GL}(\cdots):=\alpha_{GL}(\cdots)^{-1}M_{GL}(\cdots)$ is holomorphic for $s\in \mathbb{C}$ with
	\[\alpha_{GL}(\cdots):=\prod_{j=0}^{c-1}L(2s-j,\tau\times \tau). \]
	On the other hand, by the induction assumption, we know that 
	\[M^*_{c,a-1}(s,~\tau,~\sigma_{r})\circ M^*_{GL}\left((s+\frac{1}{2},-s+\frac{a-1}{2}),~\rho_{c}(\tau_{a-1})\otimes \rho_{c}(\tau)\right)\circ M^*_{c,1}(s-\frac{a-1}{2},~\tau,~\sigma_{r}) \]
	is holomorphic for $s\in \mathbb{C}$, and $M^*_{c,a}(s,~\tau,~\sigma_{r})=\frac{\alpha_{c,a-1}(s+\frac{1}{2},~\tau,~\sigma_{r})\alpha_{GL}(\cdots)\alpha_{c,1}(s-\frac{a-1}{2},~\tau,~\sigma_{r})}{\alpha_{c,a}(s,~\tau,~\sigma_{r})}\times $
	\[M^*_{c,a-1}(s,~\tau,~\sigma_{r})\circ M^*_{GL}\left((s+\frac{1}{2},-s+\frac{a-1}{2}),~\rho_{c}(\tau_{a-1})\otimes \rho_{c}(\tau)\right)\circ M^*_{c,1}(s-\frac{a-1}{2},~\tau,~\sigma_{r}). \]
	Thus the possible poles of $M^*_{c,a}(s,~\tau,~\sigma_{r})$ is governed by \[P_{1'}:=\frac{\alpha_{c,a-1}(s+\frac{1}{2},~\tau,~\sigma_{r})\alpha_{GL}(\cdots)\alpha_{c,1}(s-\frac{a-1}{2},~\tau,~\sigma_{r})}{\alpha_{c,a}(s,~\tau,~\sigma_{r})}\]
	which equals, $[\cdots]$ means may appear, 
	\begin{align*}
	[L(s-\frac{a-r_2}{2}-\frac{c-1}{2},\tau\times \tau)]&\prod_{i=1}^{\lceil\frac{c}{2}\rceil}L(2s-c-1+2i,\tau,\rho^-)L\left(2(s-\frac{a-1}{2})-c-1+2i,\tau,\rho\right) \\
	\times [L(s-\frac{a-1}{2}-\frac{c-1}{2},\tau\times \sigma)]&\prod_{i=1}^{\lfloor\frac{c}{2}\rfloor}L(2s-c+2i,\tau,\rho)L\left(2(s-\frac{a-1}{2})-c+2i,\tau,\rho^-\right). 
	\end{align*}
	It is easy to see that the real parts of poles of $P_{1'}$ are non-negative numbers.
	
	Comparing $P_1$ and $P_{1'}$, we see that the common poles are at $Re(s)=0$ and given by
	\[\begin{cases}
	L(2s,\tau,\rho^-), &\mbox{ if $c$ is odd;}\\
	L(2s,\tau,\rho), &\mbox{ if $c$ is even.}
	\end{cases}\]
	To get rid of the above possible poles, we show directly that $M^*_{c,a}(0,~\tau,~\sigma_{r})$ is holomorphic. This follows from the following facts.
	\begin{enumerate}[(i).]
		\item $\rho_{c}(\tau_a)\rtimes \sigma_{r}$ is multiplicity-free. This is known by M{\oe}glin (cf. \cite{moeglin2006paquets,moeglin2011multiplicityone}).
		\item $M^*_{c,a}(0,~\tau,~\sigma_{r})^2=id.$, up to a non-zero scalar. This follows from a simple calculation of the analogue normalization factor $\beta_{c,a}(s,~\tau,~\sigma_{r})$.
	\end{enumerate}
	To be precise, by (i), write $\rho_{c}(\tau_a)\rtimes \sigma_{r}=\oplus_i\pi_i$ with $\pi_i\not\simeq\pi_j$ for any $i\neq j$, then by (ii), we must have $M^*_{c,a}(0,~\tau,~\sigma_{r})|_{\pi_i}=id.$, up to a non-zero scalar. Whence our reduction step holds provided that the base cases are proved in the following sections.
\end{proof}	

\section{proof of main theorem \ref{mainthm}: case $\rho_{c}(\tau)\rtimes \sigma_{r}$}\label{case1}

\begin{proof}[Proof of Main Theorem \ref{mainthm} (Case $\rho_{c}(\tau)\rtimes \sigma_{r}$)]As usual, our proof involves an induction argument. In what follows, we will first discuss the initial cases, i.e., 
	\[\mbox{Case }|det(\cdot)|^s\tau\rtimes \sigma_{r}\mbox{ and Case }|det(\cdot)|^s\rho_{c}(\tau)\rtimes \sigma. \] 
	Notice that the former case, proved in \cite{luo2021casselmanshahidiconjecture}, is also an initial case in Section \ref{case2} and will be discussed there, so here we only prove the latter case as follows.
	
	\underline{\bf Step 1 }(Initial step for induction). Recall that we have the following reduced decompositions of 
	\[w_c:=w_{c;1}=(-1)^{kc}\begin{pmatrix}
	&  & I_{kc} \\ 
	& I_{N_0} &  \\ 
	\pm I_{kc} &  & 
	\end{pmatrix} . \]
	with respect to partitions of $c$ 
	\[w_c=w_{c-j}w_{c-j,j}w_{j}, \]
	where $j=1$ or $c-1$ and
	\[w_{c-j,j}:=w_{c-j,j;1}=\begin{pmatrix}
	& I_{kj} &  &  &  \\ 
	I_{k(c-j)} &  &  &  &  \\ 
	&  & I_{N_0} &  &  \\ 
	&  &  &  & I_{k(c-j)} \\ 
	&  &  & I_{kj} & 
	\end{pmatrix}. \]
	and their corresponding discrepancies of normalization factors
	\begin{align*}
	P_1:&=\frac{\alpha_{c-1}(s-\frac{1}{2},~\tau,~\sigma)\cdot \alpha_{GL}(2s-\frac{1}{2}+\frac{c-1}{2},~\tau)\cdot \alpha(s+\frac{c-1}{2},~\tau,~\sigma)}{ \alpha_{c}(s,~\tau,~\sigma)}\\
	&=\begin{cases}
	L(2s,\tau,\rho^-)L(2s+c-1,\tau,\rho)L(s+\frac{c-1}{2},\tau\times\sigma),  & \mbox{$c$ is odd};\\
	L(2s,\tau,\rho)L(2s+c-1,\tau,\rho)L(s+\frac{c-1}{2},\tau\times\sigma),  & \mbox{$c$ is even}.
	\end{cases}\\
	P_{2}:&=\frac{\alpha(s-\frac{c-1}{2},~\tau,~\sigma)\cdot \alpha_{GL}(2s+\frac{1}{2}-\frac{c-1}{2},~\tau)\cdot \alpha_{c-1}(s+\frac{1}{2},~\tau,~\sigma)}{ \alpha_{c}(s,~\tau,~\sigma)}.\\
		&=\begin{cases}
	L(2s-c+2,\tau,\rho)L(2s+1,\tau,\rho)L(s-\frac{c-3}{2},\tau\times\sigma),  & \mbox{$c$ is even};\\
	L(2s-c+2,\tau,\rho)L(2s+1,\tau,\rho^-)L(s-\frac{c-3}{2},\tau\times\sigma),  & \mbox{$c$ is odd}.
	\end{cases}
	\end{align*}
	It is readily to see that if $(P_1^{-1},~P_{2}^{-1})=1$, i.e., co-prime in $\mathbb{C}[q^{-s}]$, then the holomorphy of $M^*_c(s,~\tau,~\sigma)$ follows easily from induction on $c$, while the case $c=1$ is just a rephrase of the definition of $L$-factors by the Langlands--Shahidi theory. Then the remaining issue is to rule out those common poles. One can compare terms $P_1$ with $P_{2}$ to get the following possible common poles.
	\[c=2\mbox{ and }s=0\mbox{ or }s=-\frac{1}{2},\qquad c=3\mbox{ and }s=0, \]
	where the last case occurs if and only if
	\[L(s,\tau\times\sigma) \mbox{ and }L(2s,\tau,\rho^-) \mbox{ both have a pole at }s=0. \]
	Now let us start to rule out those points case-by-case as done in Lemma \ref{lemGL}.
	
	\underline{$c=2$ and $s=-\frac{1}{2}$}:
	\[\xymatrix@C+3pc{|det(\cdot)|^{-1}\tau\times \mbox{$\underbrace{\tau\rtimes \sigma}$}\ar[r]^-{\mbox{scalar}}_-{\mbox{simple pole}} & \mbox{$\underbrace{|det(\cdot)|^{-1}\tau\times \tau}$}\rtimes\sigma\ar[d]^{\mbox{subrep}\mapsto 0}_{\mbox{simple zero}}\\
		\tau\times |det(\cdot)|\tau\rtimes \sigma & \tau\times \mbox{$\underbrace{ |det(\cdot)|^{-1}\tau\rtimes \sigma}$}\ar[l] }\] 
	Here we can see that $|det(\cdot)|^{-\frac{1}{2}}\rho_2(\tau)\rtimes\sigma\hookrightarrow |det(\cdot)|^{-1}\tau\times \tau\rtimes \sigma\longmapsto 0$, thus we get the holomorphy of $M_2(s,~\tau,~\sigma)$ at this point.
	
	\underline{$c=2$ and $s=0$}:
	\[\xymatrix@C+3pc{|det(\cdot)|^{-\frac{1}{2}}\tau\times \mbox{$\underbrace{|det(\cdot)|^{\frac{1}{2}}\tau\rtimes \sigma}$}\ar[r]^{\mbox{subrep}\mapsto 0}_{\mbox{simple zero}} & \mbox{$\underbrace{|det(\cdot)|^{-\frac{1}{2}}\tau\times |det(\cdot)|^{-\frac{1}{2}}\tau}$}\rtimes\sigma\ar[d]^-{\mbox{scalar}}_-{\mbox{simple pole}}\\
		|det(\cdot)|^{-\frac{1}{2}}\tau\times |det(\cdot)|^{\frac{1}{2}}\tau\rtimes \sigma & |det(\cdot)|^{-\frac{1}{2}}\tau\times \mbox{$\underbrace{ |det(\cdot)|^{-\frac{1}{2}}\tau\rtimes \sigma}$}\ar[l]^{\mbox{subrep}\mapsto 0}_{\mbox{simple zero}} }\]
	Arguing as above, we see that $M_2(s,~\tau,~\sigma)$ is holomorphic at $s=0$. 
	
	\underline{$c=3$ and $s=0$}: 
	\[\xymatrix@C+3pc{|det(\cdot)|^{-1}\tau\times \tau\times \mbox{$\underbrace{ |det(\cdot)|^{1}\tau\rtimes \sigma}$} \ar[r]^{\mbox{subrep}\mapsto 0}_{\mbox{simple zero}} & |det(\cdot)|^{-1}\tau\times \mbox{$\underbrace{ \tau\times |det(\cdot)|^{-1}\tau}$}\rtimes \sigma \ar[d]^{\mbox{subrep}\mapsto 0}_{\mbox{simple zero}} \\
		|det(\cdot)|^{-1}\tau\times |det(\cdot)|^{-1}\tau\times \mbox{$\underbrace{\tau\rtimes \sigma}$} \ar[d]^{\mbox{scalar}}_{\mbox{simple pole}} & \mbox{$\underbrace{ |det(\cdot)|^{-1}\tau\times |det(\cdot)|^{-1}\tau}$}\times \tau\rtimes \sigma \ar[l]^-{\mbox{scalar}}_-{\mbox{simple pole}} \\
		|det(\cdot)|^{-1}\tau\times  \mbox{$\underbrace{|det(\cdot)|^{-1}\tau\times \tau}$} \rtimes \sigma\ar[r]^{\mbox{subrep}\mapsto 0}_{\mbox{simple zero}} & |det(\cdot)|^{-1}\tau\times \tau\times \mbox{$\underbrace{ |det(\cdot)|^{-1}\tau\rtimes \sigma}$} \ar[d]^{\mbox{subrep}\mapsto 0}_{\mbox{simple zero}} \\
		& |det(\cdot)|^{-1}\tau\times \tau\times |det(\cdot)|^{1}\tau\rtimes \sigma }\]
	One can see that the product of the middle four intertwining operators is a scalar, but with a simple pole, then consider the product of it with the remaining two intertwining operators, we obtain a non-zero scalar map. Thus $M_3(s,~\tau,~\sigma)$ is holomorphic at $s=0$. Whence we have completed the proof of the holomorphy of $M^*_c(s,~\tau,~\sigma)$ for all $s$.
	\begin{rem}
		One may notice that the holomoporphy of $M^*_c(s,~\tau,~\sigma)$ at $Re(s)=0$ follows similarly from an argument in the last part of Section \ref{redsteps} using a general multiplicity-free property of certain unitary induced representations, but we would like to keep it here for its elementariness and in case we need it in a future work.
	\end{rem}
	
	\underline{\bf Step 2 }(Induction step). Recall that $\sigma_{r}$ corresponds to the following Langlands parameter \[\phi_{\sigma_{r}}=\phi_{\tau}\otimes S_{r_1}\oplus\phi_{\tau}\otimes S_{r_2}\oplus\phi_{\sigma},\]
	where $r_1>r_2\geq -1$ of the same parity. Notice that we can decompose $M_c(s,~\tau,~\sigma_r)$ in terms of the following commutative diagram.	
	
	\underline{Way 3}: Viewing $\sigma_r\hookrightarrow|det(\cdot)|^{\frac{r_1-r_2}{4}}\tau_{\frac{r_1+r_2}{2}}\rtimes \sigma$, it gives rise to
	\[\xymatrix{|det(\cdot)|^{s}\rho_{c}(\tau)\rtimes \sigma_{r} \ar@{^(->}[r] \ar[ddd]_{M_c(s,\tau,\sigma_{r})} & \mbox{$\underbrace{|det(\cdot)|^{s}\rho_{c}(\tau)\times|det(\cdot)|^{\frac{r_1-r_2}{4}}\tau_{\frac{r_1+r_2}{2}}}$}\rtimes \sigma\ar[d]^{M_{GL}(\cdots)}\\
		&|det(\cdot)|^{\frac{r_1-r_2}{4}}\tau_{\frac{r_1+r_2}{2}}\times\mbox{$\underbrace{|det(\cdot)|^{s}\rho_{c}(\tau)\rtimes\sigma}$} \ar[d]^{M_c(s,\tau,\sigma)}\\ 
		&\mbox{$\underbrace{|det(\cdot)|^{\frac{r_1-r_2}{4}}\tau_{\frac{r_1+r_2}{2}}\times|det(\cdot)|^{-s}\rho_{c}(\tau)}$}\rtimes \sigma  \ar[d]^{M_{GL}(\cdots)}\\
		|det(\cdot)|^{-s}\rho_{c}(\tau)\rtimes \sigma_{r} \ar@{^(->}[r] & |det(\cdot)|^{-s}\rho_{c}(\tau)\times|det(\cdot)|^{\frac{r_1-r_2}{4}}\tau_{\frac{r_1+r_2}{2}}\rtimes\sigma.
	}\]
	Via Lemma \ref{lemGL} + (A)(B), one can calculate the normalization factors of intertwining operators and obtain the discrepancy $P_3$ associated to Way 3 as follows.
	\begin{align*}
	P_3:&=\frac{\alpha_{GL}(\cdots)\alpha_c(s,~\tau,~\sigma)\alpha_{GL}(\cdots)}{\alpha_c(s,~\tau,~\sigma_r)}\\
	&=\begin{cases}
	1, &\mbox{$r_2>0$},\\
	L\left(s-\frac{c-r_2}{2},\tau\times \tau\right),& \mbox{$r_2= 0$},\\
	L\left(s-\frac{c-1}{2},\tau\times \sigma\right)L\left(s-\frac{c-r_2}{2},\tau\times \tau\right),& \mbox{$r_2< 0$}.
	\end{cases}
	\end{align*}
	It is easy to see that the real parts of possible poles of $P_3$ are non-negative numbers. On the other hand, regarding the reduced decomposition given by $w_c=w_{c-1}w_{c-1,1}w_1$, i.e., Way 1 in the above {\bf Step 1 }(Initial step for induction), we know that the corresponding discrepancy $P_1$ of normalization factors is given by
	\begin{align*}
    P_1:&=\frac{\alpha_{c-1}(s-\frac{1}{2},~\tau,~\sigma_r)\cdot \alpha_{GL}(2s-\frac{1}{2}+\frac{c-1}{2},~\tau)\cdot \alpha(s+\frac{c-1}{2},~\tau,~\sigma_r)}{ \alpha_{c}(s,~\tau,~\sigma_r)}\\
    &=\begin{cases}
    L(2s,\tau,\rho^-)L(2s+c-1,\tau,\rho)L(s+\frac{c-1}{2},\tau\times\sigma_r),  & \mbox{$c$ is odd};\\
    L(2s,\tau,\rho)L(2s+c-1,\tau,\rho)L(s+\frac{c-1}{2},\tau\times\sigma_r),  & \mbox{$c$ is even}.
    \end{cases}
    \end{align*}
    It is also easy to see that the real parts of possible poles of $P_1$ are non-positive numbers. Comparing $P_1$ and $P_3$, we obtain that it suffices to prove the holomorphy of $M^*_c(s~\tau,~\sigma_{r})$ at $Re(s)=0$. Notice that this can be proved in the same way as done in the last part of Section \ref{redsteps}, therefore the induction step holds. Whence we complete the proof of this base case.
\end{proof}

\section{proof of main theorem \ref{mainthm}: case $\tau_a\rtimes \sigma_{r}$}\label{case2}
As usual, the argument relies on the intrinsic non-symmetry property of normalization factors corresponding to different reduced decompositions of Weyl elements as seen in Proposition \ref{propGL}. One may notice that a proof of this case has been given in \cite{luo2021casselmanshahidiconjecture}, but a simplified argument is present in the following for completeness.
\begin{proof}[Proof of Main Theorem \ref{mainthm} (Case $\tau_a\rtimes\sigma_{r}$)] Recall that $r_1>a>r_2$ and the generic discrete series $\sigma_{r}$ corresponds to the Langlands parameter \[\phi_{\sigma_{r}}=\phi_{\tau}\otimes S_{r_1}\oplus\phi_{\tau}\otimes S_{r_2}\oplus \phi_{\sigma}.\]
	
	\underline{\bf Step 1 }(Induction step). Notice that $a>1$, we have the following reduced decomposition of $M(s,~\tau_a,~\sigma_{r})$ corresponding to the embedding 
	
	\underline{Way 1': $\tau_a\hookrightarrow|det(\cdot)|^{\frac{1}{2}}\tau_{a-1}\times |det(\cdot)|^{-\frac{a-1}{2}}\tau$}:
	\[\xymatrix{ |det(\cdot)|^{s}\tau_a\rtimes \sigma_r \ar@{^(->}[r] \ar[ddd]_{M(s,\tau_a,\sigma_r)} & |det(\cdot)|^{s+\frac{1}{2}}\tau_{a-1}\times\mbox{$\underbrace{|det(\cdot)|^{s-\frac{a-1}{2}}\tau\rtimes \sigma_r}$}\ar[d]^{M(s-\frac{a-1}{2},\tau,\sigma_r)}\\
		&\mbox{$\underbrace{|det(\cdot)|^{s+\frac{1}{2}}\tau_{a-1}\times|det(\cdot)|^{-s+\frac{a-1}{2}}\tau}$}\rtimes \sigma_r  \ar[d]^{M_{GL}(\cdots)}\\
		&|det(\cdot)|^{-s+\frac{a-1}{2}}\tau\times\mbox{$\underbrace{|det(\cdot)|^{s+\frac{1}{2}}\tau_{a-1}\rtimes\sigma_r}$} \ar[d]^{M(s+\frac{1}{2},\tau_{a-1},\sigma_r)}\\ 
		|det(\cdot)|^{-s}\tau_a\rtimes \sigma_r \ar@{^(->}[r] & |det(\cdot)|^{-s+\frac{a-1}{2}}\tau\times|det(\cdot)|^{-s-\frac{1}{2}}\tau_{a-1}\rtimes\sigma_r.
	}\] 
	Via Lemma \ref{lemGL} + (A)(B) + ($\star\star$), one can carry out the calculation for the corresponding normalization factors and the discrepancy $P_{1'}$ associated to Way $1'$ as follows.
	\begin{align*}
	P_{1'}:&=\begin{cases}
	L(2s,\tau,\rho^-)L(2s-(a-1),\tau,\rho)L\left(s-\frac{a-r_2}{2},\tau\times \tau\right)L\left(s-\frac{a-1}{2},\tau\times \sigma\right), & \mbox{if $a$ odd and $r_2>0$};\\
	L(2s,\tau,\rho)L(2s-(a-1),\tau,\rho)L\left(s-\frac{a-r_2}{2},\tau\times \tau\right)L\left(s-\frac{a-1}{2},\tau\times \sigma\right), & \mbox{if $a$ even and $r_2>0$};\\
	L(2s,\tau,\rho^-)L(2s-(a-1),\tau,\rho), & \mbox{if $a$ odd and $r_2\leq 0$};\\
	L(2s,\tau,\rho)L(2s-(a-1),\tau,\rho), & \mbox{if $a$ even and $r_2\leq 0$}.
	\end{cases}
	\end{align*}
	Thus it is easy to see that the real parts of possible poles of $P_{1'}$ are non-negative numbers. On the other hand, it is well-known that the intertwining operator $M(s,~\tau_a,~\sigma_r)$ is always well-defined for $Re(s)>0$, thus if we could prove its holomorphy at $Re(s)=0$, then the holomorphy of $M^*(s,~\tau_a,~\sigma_r)$ is established by induction on $a$. Now we prove the holomorphy of $M^*(s,~\tau_a,~\sigma)$ at $s=0$ directly. This follows easily from a similar argument utilized in the last part of Section \ref{redsteps} by the fact that $M^*(0,~\tau_a,~\sigma)^2=id.$, up to a non-zero scalar, and the fact that $\tau_a\rtimes \sigma$ is multiplicity-free (see \cite[P. 272 Remark]{luo2020knapp} in general). Whence we finish the proof of Case $\tau_a\rtimes\sigma_{r}$ provided that the initial case $|det(\cdot)|^s\tau\rtimes \sigma_{r}$ is proved.
	
	\underline{\bf Step 2 }(Initial step for induction). Indeed, there are two initial cases, i.e.,
	\[\mbox{Case }|det(\cdot)|^s\tau\rtimes \sigma_{r}\mbox{ and Case }|det(\cdot)|^s\tau_a\rtimes \sigma. \]
	But the latter case has been discussed in the above {\bf Step 1} (Induction step), so we only need to consider the former case as follows.
	
	\underline{The case $|det(\cdot)|^{s}\tau\rtimes \sigma_r$ with $r_1>1>r_2$}: In this case, we have $r_2=0$ if $r_1$ is even, $-1$ otherwise. Then we know that 
	\[\sigma_r\hookrightarrow |det(\cdot)|^{\frac{r_1}{4}}\tau_{\frac{r_1}{2}}\rtimes\sigma\mbox{ (if $r_1$ even)}; \quad \sigma_r\hookrightarrow |det(\cdot)|^{\frac{r_1+1}{4}}\tau_{\frac{r_1-1}{2}}\rtimes\sigma\mbox{ (if $r_1$ odd)},\]
	which implies that \[\alpha(s,~\tau,~\sigma_r)=L(2s,\tau,\rho)L(s,\tau\times \tau_{r_1})=L(2s,\tau,\rho)L\left(s+\frac{r_1-1}{2},\tau\times \tau\right).\]
	Note that we can decompose $M(s,\tau,\sigma_r)$ in terms of the following commutative diagram.	
	
	\underline{Way 3}: Viewing $\sigma_r\hookrightarrow|det(\cdot)|^{\frac{r_1-r_2}{4}}\tau_{\frac{r_1+r_2}{2}}\rtimes \sigma$, it gives rise to
	\[\xymatrix{|det(\cdot)|^{s}\tau\rtimes \sigma_{r} \ar@{^(->}[r] \ar[ddd]_{M(s,\tau,\sigma_{r})} & \mbox{$\underbrace{|det(\cdot)|^{s}\tau\times|det(\cdot)|^{\frac{r_1-r_2}{4}}\tau_{\frac{r_1+r_2}{2}}}$}\rtimes \sigma\ar[d]^{M_{GL}(\cdots)}\\
		&|det(\cdot)|^{\frac{r_1-r_2}{4}}\tau_{\frac{r_1+r_2}{2}}\times\mbox{$\underbrace{|det(\cdot)|^{s}\tau\rtimes\sigma}$} \ar[d]^{M(s,\tau,\sigma)}\\ 
		&\mbox{$\underbrace{|det(\cdot)|^{\frac{r_1-r_2}{4}}\tau_{\frac{r_1+r_2}{2}}\times|det(\cdot)|^{-s}\tau}$}\rtimes \sigma  \ar[d]^{M_{GL}(\cdots)}\\
		|det(\cdot)|^{-s}\tau\rtimes \sigma_{r} \ar@{^(->}[r] & |det(\cdot)|^{-s}\tau\times|det(\cdot)|^{\frac{r_1-r_2}{4}}\tau_{\frac{r_1+r_2}{2}}\rtimes\sigma.
	}\]
	Via Lemma \ref{lemGL} + (A)(B), one can calculate the normalization factors of intertwining operators and obtain the discrepancy $P_3$ associated to Way 3 as follows.
	\begin{align*}
	P_3:&=\frac{\alpha_{GL}(\cdots)\alpha(s,~\tau,~\sigma)\alpha_{GL}(\cdots)}{\alpha(s,~\tau,~\sigma_r)}\\
	&=L(s,\tau\times \sigma)L\left(s+\frac{r_2-1}{2},\tau\times \tau\right).
	\end{align*}
	As $r_2=0$ or $-1$, we can see that the real parts of possible poles of $P_3$ are non-negative numbers. A similar argument as in the above {\bf Step 1} (Induction step) shows that $M^*(s,~\tau,~\sigma_{r})$ is holomorphic for $s\in\mathbb{C}$.	
\end{proof}

\paragraph*{\textbf{Acknowledgments}} The author would like to thank Eyal Kaplan for his kindness and help. Thanks are also due to the referee for his/her detailed comments. The author was supported by the ISRAEL SCIENCE FOUNDATION Grant Number 376/21.
			
\bibliographystyle{amsalpha}
\bibliography{ref}

			
\end{document}